\documentclass[11pt]{article}

\usepackage[a4paper,margin=1in]{geometry}
\usepackage{amsmath,amssymb,amsthm,mathtools,bm}
\usepackage{mathrsfs}
\usepackage{xcolor}
\definecolor{linkblue}{RGB}{0,70,255}
\definecolor{citered}{RGB}{200,0,0}
\definecolor{urlblue}{RGB}{0,120,255}
\usepackage[
  colorlinks=true,
  linkcolor=linkblue,
  citecolor=citered,
  urlcolor=urlblue
]{hyperref}
\usepackage{cases}
\usepackage{empheq}
\usepackage{graphicx}
\usepackage{epstopdf}
\usepackage{amsthm}
\usepackage{multirow}   
\usepackage{amsmath}    
\usepackage{booktabs}

\theoremstyle{plain}

\numberwithin{equation}{section}

\newtheorem{proposition}{Proposition}[section]

\newtheorem{corollary}{Corollary}[section]

\begin{document}

\title{An Onsager Variational Scheme for Pressure-Driven Tumor Growth and Hele–Shaw Limits}

\author{
Weijie Huang\thanks{School of Mathematics and Statistics, Beijing Jiaotong University, Beijing 100044, People's Republic of China; Beijing Key Laboratory of Biological Big Data and Topological Statistics, Beijing Jiaotong University, Beijing 100044, People's Republic of China  {\tt (wjhuang@bjtu.edu.cn)}.} \ and
Xinran Ruan\thanks{(Corresponding author) School of Mathematical Sciences, Capital Normal University, Beijing 100048, People's Republic of China {\tt (xinran.ruan@cnu.edu.cn)}.}
}

\date{}
\maketitle

\begin{abstract}
Pressure-driven tumor growth models describe the coupling between cell proliferation and mechanical pressure and naturally lead to moving free boundary problems. Their numerical approximation is challenging due to degenerate diffusion, pressure-dependent proliferation, and the stiffness of the pressure law \(p=n^\gamma\) for large \(\gamma\). In this paper, we propose a structure-preserving finite difference method for this class of pressure-driven tumor growth models with pressure-dependent proliferation. The method is derived from the Onsager variational principle. The key idea is to introduce a modified energy shifted by the homeostatic pressure, so that the growth term can be written in a dissipative form and incorporated together with the transport part into a unified Rayleighian formulation. This formulation leads to a time-discrete constrained minimization problem and a fully discrete scheme with explicit mobilities and an implicit pressure update. We prove that the scheme preserves nonnegativity and the homeostatic upper bound, satisfies a discrete modified energy dissipation law, and admits a fixed-grid stiff-pressure limiting structure. Numerical experiments in one and two spatial dimensions demonstrate the accuracy of the method, its convergence toward the Hele--Shaw limit for large \(\gamma\), and its ability to capture free boundary evolutions with topology changes.

\end{abstract}

{\bf Key words.} Pressure-driven tumor growth, Onsager variational principle, structure-preserving scheme, energy stability, Hele--Shaw limit.

{\bf MSC 2020. } 65M06, 65M12, 35K65, 35R35, 92C50.

\section{Introduction}

Living tissues are active materials in which proliferation, mechanical
pressure, and crowding effects are strongly coupled
\cite{Basan2009,GomezGonzalez2020,Shraiman2005}. Continuum models for such
mechanobiological processes must describe both bulk evolution and the motion
of free boundaries separating occupied and empty regions, and, in the
stiff-pressure regime, they naturally lead to singular limits in which strong
mechanical resistance enforces an incompressibility constraint in densely
packed tissues
\cite{PerthameQuirosTangVauchelet2014,PerthameQuirosVazquez2014,PerthameVauchelet2015}.
Tumor growth is a prototypical setting in which these mechanisms appear. For
this reason, pressure-driven tumor growth models have become a useful class
of problems for studying the connection between degenerate diffusion, free
boundary motion, and reliable numerical simulation.

In this paper, we consider the pressure-driven tumor growth model
\cite{PerthameQuirosVazquez2014}
\begin{equation}\label{eq:tumor-model}
\partial_t n+\nabla\cdot j=nG(p),\qquad
j=-n\nabla p,\qquad
p=n^\gamma ,
\end{equation}
posed in a bounded domain \(\Omega\subset\mathbb R^d\). Here
\(n=n(\mathbf{x},t)\ge0\) denotes the tumor cell density,
\(j=j(\mathbf{x},t)\) is the cell flux, \(p=p(\mathbf{x},t)\) is the
mechanical pressure, and \(\gamma>1\) is the stiffness parameter in the
pressure law. The function \(G\) is the net proliferation rate. We assume
pressure-inhibited proliferation with a homeostatic pressure \(p_H>0\)
\cite{Basan2009,PerthameQuirosTangVauchelet2014,PerthameQuirosVazquez2014},
namely
\begin{equation}\label{eq:G_p_cond}
G(p_H)=0,\qquad G'(p)\le 0,\quad p\ge0 .
\end{equation}
This implies
\begin{equation}\label{eq:G_p_ineq}
(p-p_H)G(p)\le 0,\qquad p\ge0,
\end{equation}
which is the key structural condition behind the dissipative formulation
used in this work.

For fixed \(\gamma\), \eqref{eq:tumor-model} is a degenerate
reaction-diffusion equation of porous-medium type. The degeneracy gives
finite speed of propagation and allows the formation of sharp moving
interfaces. As \(\gamma\to\infty\), the pressure law enforces a congestion constraint and
the pressure becomes a Lagrange multiplier for the saturated region.
Formally, provided the corresponding bounds hold initially, the limiting
density and pressure satisfy \cite{DavidRuan2022,PerthameQuirosVazquez2014}
\begin{equation}\label{eq:n_p_limit}
0\le n_\infty\le 1,\qquad
0\le p_\infty\le p_H,\qquad
p_\infty(1-n_\infty)=0,
\end{equation}
 together with the complementarity relation
\begin{equation}\label{eq:limiting_p}
p_\infty(\Delta p_\infty+G(p_\infty))=0 .
\end{equation}
In the patch case
\(n_\infty=\mathbf{1}_{\Omega(t)}\), with
\(\Omega(t)=\{p_\infty>0\}\), this gives the Hele--Shaw free boundary
problem
\begin{subequations}\label{eq:HS_limit}
\begin{empheq}[left=\empheqlbrace]{alignat=2}
&-\Delta p_\infty = G(p_\infty), \quad
&& \text{in } \Omega(t), \label{eq:HS_limit_pressure}\\
&p_\infty = 0, \quad
&& \text{on } \partial\Omega(t), \label{eq:HS_limit_boundary}\\
&V_n = -\partial_\nu p_\infty, \quad
&& \text{on } \partial\Omega(t). \label{eq:HS_limit_velocity}
\end{empheq}
\end{subequations}
Here \(V_n\) is the outward normal velocity of the free boundary and
\(\nu\) is the outward unit normal to \(\Omega(t)\).

The mathematical analysis of these models has developed rapidly. B.
Perthame, F. Quir\'os, and J. L. V\'azquez established the Hele--Shaw
asymptotics for mechanical tumor growth models and derived the
complementarity relation in the incompressible limit
\cite{PerthameQuirosVazquez2014}. I. Kim and N. Po\v{z}\'ar proved
convergence from the porous medium equation to Hele--Shaw flow for general
initial densities \cite{KimPozar2018}. B. Perthame and N. Vauchelet studied
the incompressible limit of a mechanical tumor growth model with viscosity
\cite{PerthameVauchelet2015}. B. Perthame, F. Quir\'os, M. Tang, and N.
Vauchelet derived Hele--Shaw-type systems from cell models that include
active motion \cite{PerthameQuirosTangVauchelet2014}. N. David and B.
Perthame analyzed tumor growth models coupled to nutrients, including the
free boundary limit and pressure regularity \cite{DavidPerthame2021}.

From the numerical point of view, \eqref{eq:tumor-model} contains several
interacting difficulties. The diffusion degenerates in vacuum regions, so
standard parabolic smoothing is not available near the interface. The
reaction term changes the mass and must remain compatible with
pressure-inhibited proliferation. For large \(\gamma\), the pressure law is
stiff and the numerical density should approach the congestion constraint
without spurious overshoots. A useful scheme should therefore preserve
nonnegativity, the homeostatic upper bound, and a suitable energy
dissipation law, while also retaining a fixed-grid stiff-pressure limiting
structure in the Hele--Shaw limit.

There is a large literature on numerical methods for porous medium and
degenerate diffusion equations. J. L. Graveleau and P. Jamet developed early
finite difference methods for nonlinear diffusion with finite speed of
propagation \cite{GraveleauJamet1971}. E. DiBenedetto and D. Hoff proposed
an interface tracking algorithm for the porous medium equation
\cite{DiBenedettoHoff1984}. L. Monsaingeon designed an explicit finite
difference method for one-dimensional generalized porous medium equations,
with particular attention to interface tracking and hole filling
\cite{Monsaingeon2016}. Y. Liu, C.-W. Shu, and M. Zhang constructed
high-order finite difference WENO schemes that reduce oscillations near
sharp interfaces \cite{LiuShuZhang2011}. M. Bessemoulin-Chatard and F.
Filbet developed finite volume schemes for nonlinear degenerate parabolic
equations \cite{BessemoulinFilbet2012}. J. A. Carrillo, H. Ranetbauer, and
M.-T. Wolfram used evolving diffeomorphisms to approximate nonlinear
continuity equations \cite{CarrilloRanetbauerWolfram2016}. C. Liu and Y.
Wang constructed Lagrangian schemes from a discrete energetic variational
approach \cite{LiuWang2020}.

Numerical methods have also been developed specifically for tumor growth
models and their Hele--Shaw limits. J.-G. Liu, M. Tang, L. Wang, and Z.
Zhou proposed an accurate front capturing scheme for tumor growth models
with a free boundary limit \cite{LiuTangWangZhou2018}. The same authors studied tumor growth models with nutrients,
connecting cell-density models to free-boundary dynamics \cite{LiuTangWangZhou2019}. N. David and
X. Ruan studied an upwind finite difference scheme, proved stability
estimates and an asymptotic-preserving property, and investigated focusing
solutions numerically \cite{DavidRuan2022}. Most existing methods start
from a direct discretization of the PDE. In contrast, the present work
starts from a modified energy and an Onsager Rayleighian, and treats the
transport flux and the pressure-dependent growth term in the same
variational framework.

The Onsager variational principle provides a systematic way to derive
dissipative dynamics from an energy and a dissipation mechanism. L. Onsager
introduced the reciprocal relations and the variational structure of
irreversible processes \cite{Onsager1931a,Onsager1931b}. M. Doi developed and promoted 
the Onsager principle as a modeling and approximation tool for soft matter
systems \cite{Doi2013,Doi2015}. R. Jordan, D. Kinderlehrer, and F. Otto
showed that the Fokker--Planck equation can be obtained as a minimizing
movement scheme in the Wasserstein metric \cite{JordanKinderlehrerOtto1998}.
C. Duan, C. Liu, C. Wang, and X. Yue used an energetic variational approach
for the porous medium equation \cite{DuanLiuWangYue2019}. H. Chen, H. Liu,
and X. Xu recently presented a general Onsager-based framework for
structure-preserving numerical schemes, emphasizing energy stability and
constraints through discrete Rayleighian minimization \cite{ChenLiuXu2025}.

The key observation of this paper is that the correct dissipative variable for
pressure-inhibited proliferation is obtained by shifting the pressure energy by
the homeostatic pressure. With this shift, the variational derivative is
measured relative to $p_H$, and the pressure-inhibition condition makes the
reaction contribution compatible with energy dissipation. This allows the
transport flux and the reaction source to be incorporated into a single
Rayleighian formulation. Based on this structure, we derive an Onsager
variational formulation for pressure-driven tumor growth with
pressure-inhibited proliferation, construct a time-discrete scheme from a
discrete Rayleighian, and prove its modified energy stability. We then propose
a fully discrete finite difference scheme with explicit mobilities and an
implicit pressure update, and show that it preserves nonnegativity, the
homeostatic upper bound, and a discrete modified energy dissipation law. We
also establish a fixed-grid stiff-pressure limiting structure as
$\gamma\to\infty$, including a discrete density-pressure complementarity
relation and a Hele--Shaw-type pressure equation in saturated cells. Finally, one- and two-dimensional numerical experiments are presented to assess the accuracy, structure preservation, large-$\gamma$ behavior, and ability of
the method to capture free boundary evolutions with topology changes.

The rest of the paper is organized as follows. In Section~\ref{sec:model},
we present the model, state the assumptions on the growth function, and
collect several structural properties. In Section~\ref{sec:onsager},  we develop the Onsager-based numerical scheme and analyze its
structure-preserving properties and fixed-grid stiff-pressure limiting structure. Numerical experiments are reported in
Section~\ref{sec:numerics}. Finally, conclusions are given in
Section~\ref{sec:conclusion}.

\section{Basic properties of the model \eqref{eq:tumor-model}}\label{sec:model}
In this section, we recall the structural properties of the continuous model.
Throughout this
section, \(n=n(\mathbf{x},t)\), \(j=j(\mathbf{x},t)\), and \(p=p(\mathbf{x},t)\) are as in
\eqref{eq:tumor-model}, and the equations are considered in
\(\Omega\times(0,T]\). 
We use the initial condition
$
    n(\mathbf{x},0)=n_0(\mathbf{x}), \,  \mathbf{x}\in\Omega ,
$
and the no-flux boundary condition
\begin{equation}\label{eq:no-flux}
    j\cdot\nu=0,
    \qquad \mathbf{x}\in\partial\Omega,\quad 0<t\le T .
\end{equation}
Here \(\nu\) is the outward unit normal vector of $\partial \Omega$.
For the growth function \(G\) in \eqref{eq:tumor-model}, we assume that
it satisfies \eqref{eq:G_p_cond}-\eqref{eq:G_p_ineq}.
With this assumption, the following elementary estimates hold for smooth
solutions.
\begin{proposition}[Positivity, upper bound, and mass estimate]
\label{prop:continuous-bounds}
Assume that
\[
    0\le p_0(\mathbf{x})=n_0(\mathbf{x})^\gamma\le p_H
    \qquad \text{in } \Omega .
\]
Then any smooth solution of \eqref{eq:tumor-model} subject to
 \eqref{eq:no-flux} satisfies
\[
    0\le p(\mathbf{x},t)\le p_H,
    \qquad
    0\le n(\mathbf{x},t)\le n_H:=p_H^{1/\gamma},
    \qquad \mathbf{x}\in\Omega,\quad 0\le t\le T .
\]
Moreover,
\begin{equation}\label{eq:l1-bound-property}
    \int_\Omega n(\mathbf{x},t)\,d\mathbf{x}
    \le
    e^{G(0)t}\int_\Omega n_0(\mathbf{x})\,d\mathbf{x},
    \qquad 0\le t\le T .
\end{equation}
\end{proposition}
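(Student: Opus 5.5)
We argue through the pressure equation. For smooth solutions with $n>0$, the relation $p=n^\gamma$ gives $\partial_t p=\gamma n^{\gamma-1}\partial_t n$. Using $\nabla\cdot(n\nabla p)=\nabla n\cdot\nabla p+n\Delta p$ and $\gamma n^{\gamma-1}\nabla n=\nabla p$, the density equation becomes
\[
\partial_t p=|\nabla p|^2+\gamma p\,\bigl(\Delta p+G(p)\bigr),
\]
which is the standard pressure equation of Perthame--Quir\'os--V\'azquez. The no-flux condition \eqref{eq:no-flux} reads $n\partial_\nu p=0$, so $\partial_\nu p=0$ wherever $n>0$ on $\partial\Omega$. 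Where $n=0$, the identity $p=0$ holds trivially.

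\textbf{Nonnegativity.} This is easiest directly on $n$. The equation is $\partial_t n=\nabla\cdot(n\nabla p)+nG(p)$, which is linear in $n$ once $p$ is frozen: $\partial_t n=\nabla n\cdot\nabla p+n(\Delta p+G(p))$. For a smooth solution we may take the convention that $n\ge0$ is part of the setting, since $p=n^\gamma$ needs it for noninteger $\gamma$. Alternatively, a Stampacchia argument applies. Multiply by $\operatorname{sgn}_-(n)$, i.e.\ test with $n_-$. The flux term gives zero boundary contribution and the remaining terms are bounded by $C\|n_-\|_{L^1}$, so Gronwall together with $n_-(0)=0$ gives $n_-\equiv0$. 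Then $p=n^\gamma\ge0$.

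\textbf{Upper bound.} This is the key step. Set $q=(p-p_H)_+$, multiply the pressure equation by $\mathbf 1_{\{p>p_H\}}$, and integrate; equivalently, test with $\operatorname{sgn}_+(p-p_H)$. The plan is to use the divergence form instead, which avoids degeneracy issues. Write $\partial_t n=\nabla\cdot(n\nabla p)+nG(p)$ and test with $\eta'(n)$, where $\eta(n)=(n-n_H)_+$ is smoothed as usual. Since $n>n_H$ exactly when $p>p_H$, this gives
\[
\frac{d}{dt}\int_\Omega (n-n_H)_+\,d\mathbf x=\int_{\{n>n_H\}}\nabla\cdot(n\nabla p)\,d\mathbf x+\int_{\{n>n_H\}} nG(p)\,d\mathbf x .
\]
The growth term is $\le0$ by \eqref{eq:G_p_ineq}. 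For the transport term, the smoothed version gives $-\int \eta_\varepsilon''(n)\,n\nabla n\cdot\nabla p\,d\mathbf x\le0$, since $\nabla n\cdot\nabla p=\gamma n^{\gamma-1}|\nabla n|^2\ge0$, and the boundary term vanishes by \eqref{eq:no-flux}. So $\int(n-n_H)_+$ is nonincreasing and zero initially, hence $n\le n_H$ and $p\le p_H$. The main obstacle is justifying this regularized entropy computation near the set $\{n=n_H\}$ and at the boundary. A maximum-principle argument on the pressure equation is a fallback: at an interior maximum $p>p_H$ we have $\nabla p=0$ and $\Delta p\le0$, so $\partial_t p\le \gamma pG(p)<0$, which handles it provided boundary maxima are excluded using $\partial_\nu p=0$.

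\textbf{Mass estimate.} Integrate the equation over $\Omega$. The flux integrates to zero by \eqref{eq:no-flux}, so $\frac{d}{dt}\int n=\int nG(p)\le G(0)\int n$, since $G$ is nonincreasing and $p\ge0$, together with $n\ge0$. Gronwall then gives \eqref{eq:l1-bound-property}.
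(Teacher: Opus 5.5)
Your proof is correct, but for the upper bound it takes a different route from the paper. The paper derives the same pressure equation and then argues in one line. Since $G(p_H)=0$, the constants $0$ and $p_H$ are sub- and supersolutions of $\partial_t p=|\nabla p|^2+\gamma p(\Delta p+G(p))$, and the comparison principle yields $0\le p\le p_H$. Its mass estimate is exactly your Gronwall argument.

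You instead test the conservative density equation with $\eta_\varepsilon'(n)$, where $\eta_\varepsilon$ is a convex regularization of $(n-n_H)_+$. The transport term becomes $-\int_\Omega\eta_\varepsilon''(n)\,\gamma n^{\gamma}|\nabla n|^2\,d\mathbf{x}\le0$. The growth term is nonpositive because $G(p)\le0$ on $\{p>p_H\}$. The boundary term vanishes by \eqref{eq:no-flux} exactly as stated.

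What your route buys is robustness. You use only $n\nabla p\cdot\nu=0$, rather than a Neumann condition for $p$, which holds only where $n>0$. You also avoid appealing to a comparison principle for a degenerate non-divergence equation, which the paper invokes without addressing the degenerate boundary condition. Your argument is in fact the continuous analogue of the paper's own time-discrete proof, which tests with $(p^{k+1}-p_H)_+$. The paper's route is shorter and standard.

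Two minor points. First, the ``main obstacle'' you mention is not one for smooth solutions. Take $\eta_\varepsilon\in C^2$ convex with $\eta_\varepsilon=0$ on $(-\infty,n_H]$ and $\eta_\varepsilon>0$ on $(n_H,\infty)$. Then $n\ge n_H>0$ on the support of $\eta_\varepsilon'$, so $p$ is smooth there and every step is classical. Moreover, $\int_\Omega\eta_\varepsilon(n(t))\,d\mathbf{x}\le0$ already gives $n\le n_H$ without sending $\varepsilon\to0$.

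Second, your fallback maximum-principle argument asserts $\gamma pG(p)<0$ for $p>p_H$. This does not follow from \eqref{eq:G_p_cond}, since $G$ may vanish on an interval above $p_H$, so that argument would need a perturbation. Your main argument does not depend on it.
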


\begin{proof}
Differentiating \(p=n^\gamma\) and using \eqref{eq:tumor-model}, we obtain
\begin{equation}\label{eq:pressure-equation-property}
    \partial_t p
    =
    |\nabla p|^2+\gamma p\bigl(\Delta p+G(p)\bigr).
\end{equation}
The constants \(0\) and \(p_H\) are respectively a subsolution and a
supersolution of this equation, since \(G(p_H)=0\). Hence the comparison
principle gives \(0\le p\le p_H\). The bound
\(0\le n\le n_H\) follows from \(p=n^\gamma\).

For the mass estimate, integrating the density equation and using the
no-flux boundary condition gives
\[
    \frac{d}{dt}\int_\Omega n\,d\mathbf{x}
    =
    \int_\Omega nG(p)\,d\mathbf{x}
    \le
    G(0)\int_\Omega n\,d\mathbf{x},
\]
where we used \(p\ge0\), \(n\ge0\), and the monotonicity of \(G\).
Gronwall's inequality then yields \eqref{eq:l1-bound-property}.
\end{proof}

\begin{proposition}[Modified energy dissipation]
\label{prop:energy-dissipation}
Define the modified energy by
\begin{equation}\label{eq:modified-energy}
    \mathcal E(n)
    =
    \int_\Omega
    \left(
    \frac{1}{\gamma+1}n^{\gamma+1}-p_H n
    \right)\,d\mathbf{x} .
\end{equation}
Let \(n\) be a nonnegative smooth solution of \eqref{eq:tumor-model}
subject to the no-flux boundary condition \eqref{eq:no-flux}. Then
\begin{equation}\label{eq:energy-dissipation}
    \frac{d}{dt}\mathcal E(n)
    =
    -\int_\Omega n|\nabla p|^2\,d\mathbf{x}
    +
    \int_\Omega n(p-p_H)G(p)\,d\mathbf{x}
    \le 0 .
\end{equation}
\end{proposition}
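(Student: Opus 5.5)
The plan is to differentiate $\mathcal E(n)$ in time, identify the variational derivative, and split the result into a transport part and a reaction part. The integrand of \eqref{eq:modified-energy} has derivative in $n$ equal to
\[
\frac{\delta \mathcal E}{\delta n}=n^\gamma-p_H=p-p_H .
\]
Since $n$ is a smooth solution, we may differentiate under the integral and obtain
\[
\frac{d}{dt}\mathcal E(n)=\int_\Omega (p-p_H)\,\partial_t n\,d\mathbf{x}.
\]
Substituting $\partial_t n=-\nabla\cdot j+nG(p)$ from \eqref{eq:tumor-model} gives two terms:
\[
-\int_\Omega (p-p_H)\,\nabla\cdot j\,d\mathbf{x}
\qquad\text{and}\qquad
\int_\Omega n(p-p_H)G(p)\,d\mathbf{x}.
\]

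For the transport term, integrate by parts:
\[
-\int_\Omega (p-p_H)\nabla\cdot j\,d\mathbf{x}
=-\int_{\partial\Omega}(p-p_H)\,j\cdot\nu\,dS+\int_\Omega \nabla p\cdot j\,d\mathbf{x}.
\]
The boundary integral vanishes by the no-flux condition \eqref{eq:no-flux}. The constant $p_H$ drops out of the gradient, which is exactly why the shift by $p_H$ costs nothing in the transport part. Inserting $j=-n\nabla p$ gives $-\int_\Omega n|\nabla p|^2\,d\mathbf{x}$, which is the identity in \eqref{eq:energy-dissipation}.

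For the sign, the first term is nonpositive because $n\ge0$. The second term is nonpositive because $n\ge0$ and the structural inequality \eqref{eq:G_p_ineq} holds pointwise. That inequality requires $p\ge0$, which follows from $p=n^\gamma$ with $n\ge0$; it could also be taken from Proposition~\ref{prop:continuous-bounds}, but that is not needed.

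I expect no real obstacle. The only points needing care are the regularity that justifies differentiating under the integral and integrating by parts, which the smoothness assumption supplies, and checking that the shift by $p_H$ leaves the flux term unchanged while making the reaction term sign-definite through \eqref{eq:G_p_ineq}.
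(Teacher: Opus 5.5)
Your proposal is correct and follows essentially the same route as the paper: you identify $\delta\mathcal E/\delta n = p-p_H$, test the density equation against it, integrate by parts with the no-flux condition to get $-\int_\Omega n|\nabla p|^2\,d\mathbf{x}$, and obtain the sign from $n\ge0$ together with \eqref{eq:G_p_ineq}. Your remark that $p\ge0$ follows directly from $p=n^\gamma$ with $n\ge0$ is a small clarification the paper leaves implicit.
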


\begin{proof}
Since
\[
    \frac{\delta\mathcal E}{\delta n}=n^\gamma-p_H=p-p_H,
\]
we multiply the density equation by \(p-p_H\) and use the no-flux boundary
condition to obtain
\[
\begin{aligned}
    \frac{d}{dt}\mathcal E(n)
    &=
    \int_\Omega (p-p_H)\partial_t n\,d\mathbf{x}  \\
    &=
    -\int_\Omega n|\nabla p|^2\,d\mathbf{x}
    +
    \int_\Omega n(p-p_H)G(p)\,d\mathbf{x} .
\end{aligned}
\]
The boundary term vanishes because \(n\nabla p\cdot\nu=-j\cdot\nu=0\) on
\(\partial\Omega\). Since \(n\ge0\) and \((p-p_H)G(p)\le0\) by \eqref{eq:G_p_ineq}, the right-hand
side is nonpositive.
\end{proof}

The linear shift \(-p_H n\) is introduced so that the variational derivative
of the energy becomes \(p-p_H\). This makes the reaction contribution
dissipative, since \(n(p-p_H)G(p)\le0\), and will be used in the Onsager
formulation below.


\section{Onsager formulation and numerical scheme}
\label{sec:onsager-scheme}
\label{sec:onsager}

We now derive the numerical scheme from the Onsager variational principle.
For a given density \(n\), the instantaneous evolution is described
by the density rate \(\partial_t n\), the flux \(j\), and the reaction rate
\(r\). To include transport and reaction in the same framework, we write the
density equation as the balance law
\begin{equation}\label{eq:balance-with-r}
    \partial_t n+\nabla\cdot j=r .
\end{equation}
The Onsager principle determines \(\partial_t n\), \(j\), and \(r\) by
minimizing the Rayleighian
\[
    \mathcal R=\Phi+\frac{d}{dt}\mathcal E
\]
under the constraint \eqref{eq:balance-with-r}. Here \(\Phi\) is a
nonnegative quadratic dissipation functional, and
\(\frac{d}{dt}\mathcal E\) is the instantaneous variation of the modified
energy \(\mathcal E\) defined in \eqref{eq:modified-energy}. The
Euler--Lagrange equations of this constrained minimization will recover the
constitutive relations
\[
    j=-n\nabla p,
    \qquad
    r=nG(p).
\]
Substituting these relations into \eqref{eq:balance-with-r}, together with
\(p=n^\gamma\), gives the original model \eqref{eq:tumor-model}.

\paragraph{Choice of the Rayleighian.}
We now choose the dissipation functional \(\Phi\) so that the above
constrained minimization gives the desired constitutive relations. For
notational simplicity, we introduce
\begin{equation}\label{eq:modified-chemical-potential}
    \mu
    :=
    \frac{\delta\mathcal E}{\delta n}
    =
    p-p_H .
\end{equation}
In the continuous Onsager formulation, \(n\) is regarded as the current
state and is not varied. Hence \(p=n^\gamma\), \(\mu\), and the mobilities
defined below are fixed coefficients in the minimization, while
\(\partial_t n\), \(j\), and \(r\) are the variables.

For the transport part, we use the standard quadratic dissipation
\[
    \frac12\int_\Omega \frac{|j|^2}{n}\,d\mathbf{x} .
\]
When the Rayleighian is minimized subject to the balance law
\eqref{eq:balance-with-r}, this term leads to the flux relation
\[
    j=-n\nabla\mu .
\]
Since \(\nabla\mu=\nabla p\), the desired flux law
\(j=-n\nabla p\) is recovered.

The main point is the treatment of the reaction term. We use a quadratic
reaction dissipation of the form
\[
    \frac12\int_\Omega \frac{r^2}{\mathcal M(n,p)}\,d\mathbf{x} .
\]
In the same Rayleighian minimization, this term leads to the Onsager
relation
\[
    r=-\mathcal M(n,p)\mu .
\]
To recover the growth term \(r=nG(p)\), and using \(\mu=p-p_H\), we first
define the continuous extension of the secant slope
\begin{equation}\label{eq:g-secant}
    g(p)
    =
    \begin{cases}
    \dfrac{G(p)}{p-p_H}, & p\ne p_H,\\[1.2ex]
    G'(p_H), & p=p_H .
    \end{cases}
\end{equation}
Then the reaction mobility is defined by
\begin{equation}\label{eq:reaction-mobility}
    \mathcal M(n,p)
    =
    -n g(p).
\end{equation}
By the assumptions on \(G\) \eqref{eq:G_p_cond}-\eqref{eq:G_p_ineq}, \(g\) is continuous and \(g(p)\le0\) for
\(p\ge0\). Hence
\[
    \mathcal M(n,p)\ge0
    \qquad \text{for } n\ge0,\quad p\ge0 .
\]
Thus the reaction dissipation is nonnegative and yields the desired growth
term through the Onsager relation.

Combining the transport and reaction parts, we define
\begin{equation}\label{eq:total-dissipation}
    \Phi(n;j,r)
    =
    \frac12\int_\Omega \frac{|j|^2}{n}\,d\mathbf{x}
    +
    \frac12\int_\Omega \frac{r^2}{\mathcal M(n,p)}\,d\mathbf{x} .
\end{equation}
The dissipation terms in \eqref{eq:total-dissipation} should be understood in
the extended sense. Namely, for \(a\ge0\),
\[
    \frac{|z|^2}{a}
    :=
    \begin{cases}
    |z|^2/a, & a>0,\\
    0, & a=0,\ z=0,\\
    +\infty, & a=0,\ z\ne0 .
    \end{cases}
\]
With these notations, the corresponding Rayleighian is
\begin{equation}\label{eq:rayleighian}
    \mathcal R(n;\partial_t n,j,r)
    =
    \Phi(n;j,r)
    +
    \int_\Omega \mu\,\partial_t n\,d\mathbf{x} .
\end{equation}
Here the semicolon emphasizes that \(n\) is the given state, while
\(\partial_t n\), \(j\), and \(r\) are the variables in the minimization.

\paragraph{Consistency verification.}
For completeness, we indicate the Euler--Lagrange relations. Introducing a
Lagrange multiplier \(\lambda\) for the constraint
\eqref{eq:balance-with-r}, we define the augmented Rayleighian
\[
    \mathcal R_\lambda
    =
    \mathcal R
    -
    \int_\Omega
    \lambda\left(\partial_t n+\nabla\cdot j-r\right)\,d\mathbf{x} .
\]
Taking the first variations of \(\mathcal R_\lambda\) with respect to
\(\partial_t n\), \(j\), and \(r\) gives
\[
    \lambda=\mu,
    \qquad
    \frac{j}{n}+\nabla\lambda=0,
    \qquad
    \frac{r}{\mathcal M(n,p)}+\lambda=0 .
\]
Hence
\begin{equation}\label{eq:onsager-el-relations}
    j=-n\nabla\mu,
    \qquad
    r=-\mathcal M(n,p)\mu .
\end{equation}
Using the definition of $\mu$ \eqref{eq:modified-chemical-potential} and \(\mathcal M(n,p)\) \eqref{eq:g-secant}-\eqref{eq:reaction-mobility}, we recover
\[
    j=-n\nabla p,
    \qquad
    r=nG(p).
\]
Thus the Onsager formulation is consistent with \eqref{eq:tumor-model}.

\subsection{Time-discrete scheme from a discrete Rayleighian}
\label{subsec:time-discrete}

We next derive a time-discrete scheme from the Rayleighian formulation and
record its bound-preserving and energy-dissipating properties.

Let \(\tau>0\) be the time step and \(t_k=k\tau\). Suppose that \(n^k\) is
known and set
\[
    p^k=(n^k)^\gamma .
\]
The mobilities in the dissipation terms are evaluated explicitly at the
known state. Thus the transport mobility is \(n^k\), and the reaction
mobility is
\begin{equation}\label{eq:discrete-reaction-mobility}
    \mathcal M^k
    =
    -n^k g(p^k).
\end{equation}
By \(g(p^k)\le0\) for \(p^k\ge0\),
\begin{equation} \label{prop:M}
    \mathcal M^k\ge0
    \qquad \text{if } n^k\ge0 .
\end{equation}

We denote by \(m\) and \(q\) the time-integrated flux and reaction over one
time step. The discrete balance law of \eqref{eq:balance-with-r} is
\begin{equation}\label{eq:discrete-balance}
    n-n^k+\nabla\cdot m=q,
    \qquad
    m\cdot\nu=0
    \quad \text{on }\partial\Omega .
\end{equation}
Here the trial density \(n\) represents the new-time value, \(m=\tau j\) and \(q=\tau r\). 
Motivated by the
continuous Rayleighian, we define the discrete Rayleighian
\begin{equation}\label{eq:discrete-rayleighian}
    \mathcal J_\tau^k(n,m,q)
    =
    \mathcal E(n) - \mathcal E(n^k)
    +
    \frac{1}{2\tau}\int_\Omega \frac{|m|^2}{n^k}\,d\mathbf{x}
    +
    \frac{1}{2\tau}\int_\Omega \frac{q^2}{\mathcal M^k}\,d\mathbf{x} .
\end{equation}
This functional is obtained by replacing \(d\mathcal E/dt\) with the energy
increment over one time step, freezing the mobilities at \(n^k\).

The next time step is defined by the constrained minimization
\begin{equation}\label{eq:discrete-minimization}
    (n^{k+1},m^{k+1},q^{k+1})
    =
    \operatorname*{arg\,min}_{(n,m,q)\in\mathcal A^k}
    \mathcal J_\tau^k(n,m,q),
\end{equation}
where
\begin{equation}\label{eq:admissible-set}
    \mathcal A^k
    =
    \left\{
    (n,m,q)\ \middle|\
    n\ge0,\quad
    n-n^k+\nabla\cdot m=q \ \text{in }\Omega,\quad
    m\cdot\nu=0 \ \text{on }\partial\Omega
    \right\}.
\end{equation}

Since the Rayleighian \eqref{eq:discrete-rayleighian} is a sum of separate convex terms in \(n\), \(m\), and
\(q\), it is convex in the joint variable \((n,m,q)\). 
Together with the convexity of the admissible set \(\mathcal A^k\),  this implies that the minimization problem \eqref{eq:discrete-minimization} is convex. 

The minimizer of \eqref{eq:discrete-minimization} can be characterized by
its Euler--Lagrange equations. Introducing a Lagrange multiplier
\(\lambda\) for the discrete balance constraint \eqref{eq:discrete-balance},
we define
\[
    \mathcal J_{\tau,\lambda}^k
    =
    \mathcal J_\tau^k
    -
    \int_\Omega
    \lambda\left(n-n^k+\nabla\cdot m-q\right)\,dx .
\]
Let \((n^{k+1},m^{k+1},q^{k+1})\) be a smooth minimizer, and let
\(\lambda^{k+1}\) be the corresponding multiplier. 
On the positivity set of $n^{k+1}$, taking variations with
respect to \(n\), \(m\), \(q\), and \(\lambda\), we obtain
\begin{align}
    &\frac{\delta\mathcal E}{\delta n}(n^{k+1})
    =\lambda^{k+1},
    \label{eq:el-discrete-n}\\
    &m^{k+1}=-\tau n^k\nabla\lambda^{k+1},
    \label{eq:el-discrete-m}\\
    &q^{k+1}=-\tau\mathcal M^k\lambda^{k+1},
    \label{eq:el-discrete-q}\\
    &n^{k+1}-n^k+\nabla\cdot m^{k+1}=q^{k+1}.
    \label{eq:el-discrete-constraint}
\end{align}
Here the boundary term in the variation with respect to \(m\) vanishes
because of the boundary condition in \eqref{eq:discrete-balance}.

Since
\[
    \lambda^{k+1}
    =
    \mu^{k+1}
    :=
    p^{k+1}-p_H,
    \qquad
    p^{k+1}=(n^{k+1})^\gamma ,
\]
substituting \eqref{eq:el-discrete-m}--\eqref{eq:el-discrete-q} into
\eqref{eq:el-discrete-constraint} gives
\begin{equation}\label{eq:time-discrete-scheme}
    \frac{n^{k+1}-n^k}{\tau}
    =
    \nabla\cdot\left(n^k\nabla\mu^{k+1}\right)
    -
    \mathcal M^k\mu^{k+1}.
\end{equation}
Equivalently, using \(\mu^{k+1}=p^{k+1}-p_H\), we obtain the pressure form
\begin{equation}\label{eq:time-discrete-scheme-pressure}
    \frac{n^{k+1}-n^k}{\tau}
    =
    \nabla\cdot\left(n^k\nabla p^{k+1}\right)
    -
    \mathcal M^k\left(p^{k+1}-p_H\right),
    \qquad
    p^{k+1}=(n^{k+1})^\gamma .
\end{equation}
Moreover,  from \(m^{k+1}\cdot\nu=0\) on $\partial \Omega$, we recover the boundary condition 
\begin{equation}\label{eq:time-discrete-boundary}
    n^k\nabla p^{k+1}\cdot\nu=0
    \qquad \text{on }\partial\Omega .
\end{equation}

The resulting scheme can be viewed as a natural time-discrete counterpart of
\eqref{eq:tumor-model}. Its main feature is the explicit evaluation of the
mobilities together with the implicit update of the pressure.
 This explicit--implicit treatment keeps the scheme simple while preserving the
main structural properties of the continuous model. 

Since the minimization problem \eqref{eq:discrete-minimization} is convex, 
the Euler--Lagrange system \eqref{eq:time-discrete-scheme-pressure} derived above
is not only a necessary condition for a smooth minimizer, but also a
sufficient condition for global minimality, whenever the corresponding
solution is admissible. In this sense, the time-discrete scheme may be
equivalently described by the constrained minimization problem \eqref{eq:discrete-minimization} or by its
Euler--Lagrange equations \eqref{eq:time-discrete-scheme-pressure}.

Before passing to the fully discrete finite difference formulation, we show the positivity,
homeostatic upper bound, and energy dissipation of the time-discrete scheme.

\begin{proposition}[Positivity and homeostatic upper bound]
\label{prop:td-positivity-upper-bound}
Assume that \(n^k\ge0\). Let
\((n^{k+1},m^{k+1},q^{k+1})\) be a minimizer of
\eqref{eq:discrete-minimization}. Then
\[
    n^{k+1}\ge0 .
\]
If, in addition, \(n^k\le n_H := p_H^{\frac{1}{\gamma}}\) and the minimizer satisfies
\eqref{eq:time-discrete-scheme-pressure}--\eqref{eq:time-discrete-boundary},
then
\[
    0\le n^{k+1}\le n_H,
    \qquad
    0\le p^{k+1}\le p_H .
\]
\end{proposition}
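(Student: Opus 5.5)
The first claim, \(n^{k+1}\ge0\), comes straight from the admissible set: every element of \(\mathcal A^k\) in \eqref{eq:admissible-set} has \(n\ge0\), so any minimizer of \eqref{eq:discrete-minimization} is nonnegative. The one thing to check is that \(\mathcal A^k\) contains a point with finite value of \(\mathcal J_\tau^k\), so that a minimizer is meaningful. The triple \((n^k,0,0)\) does this, because \(n^k\ge0\) and the extended-sense dissipation terms vanish for \(m=0\), \(q=0\). The same point also gives \(\mathcal J_\tau^k(n^{k+1},m^{k+1},q^{k+1})\le0\), which we do not need here but which will be the starting point for the energy estimate.

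For the upper bound we work with the Euler--Lagrange form \eqref{eq:time-discrete-scheme-pressure}--\eqref{eq:time-discrete-boundary} and use a maximum-principle argument for the pressure \(p^{k+1}\). The lower bound \(p^{k+1}\ge0\) follows from \(n^{k+1}\ge0\). For the upper bound we argue by contradiction at a maximum point. Suppose \(\max_\Omega p^{k+1}>p_H\), and let \(x_0\) be a point where \(p^{k+1}\) attains its maximum (interior, or on the boundary, handled below). At \(x_0\) we have \(n^{k+1}(x_0)>n_H\ge n^k(x_0)\), so the left-hand side of \eqref{eq:time-discrete-scheme-pressure} is strictly positive.

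On the right-hand side, the reaction term is \(-\mathcal M^k(x_0)\bigl(p^{k+1}(x_0)-p_H\bigr)\le0\), because \(\mathcal M^k\ge0\) by \eqref{prop:M} and \(p^{k+1}(x_0)>p_H\). For the transport term, expand
\[
\nabla\cdot(n^k\nabla p^{k+1})=\nabla n^k\cdot\nabla p^{k+1}+n^k\Delta p^{k+1}.
\]
At an interior maximum, \(\nabla p^{k+1}=0\) and \(\Delta p^{k+1}\le0\), so this term is \(\le0\) since \(n^k\ge0\). The right-hand side is therefore \(\le0\), which contradicts the positive left-hand side. Hence \(p^{k+1}\le p_H\), and \(n^{k+1}=(p^{k+1})^{1/\gamma}\le n_H\).

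The main obstacle is rigor in this maximum-principle step. Smoothness of \(n^k\) is not assumed (it is only nonnegative), and a maximum may sit on \(\partial\Omega\), where \eqref{eq:time-discrete-boundary} only gives \(n^k\partial_\nu p^{k+1}=0\). A more robust route, which I would try first, is a weak Stampacchia-type argument. Test \eqref{eq:time-discrete-scheme-pressure} against \((p^{k+1}-p_H)_+\) and integrate by parts; the boundary term vanishes by \eqref{eq:time-discrete-boundary}. This gives
\[
\frac1\tau\int_\Omega (n^{k+1}-n^k)(p^{k+1}-p_H)_+
=-\int_\Omega n^k|\nabla (p^{k+1}-p_H)_+|^2-\int_\Omega \mathcal M^k (p^{k+1}-p_H)_+^2\le0 .
\]
On the support of \((p^{k+1}-p_H)_+\) we have \(n^{k+1}>n_H\ge n^k\), so the integrand on the left is strictly positive there. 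Since the left side must be \(\le0\), the support has measure zero, and \(p^{k+1}\le p_H\) almost everywhere. This avoids both pointwise derivatives of \(n^k\) and any separate boundary analysis, so I would present it as the proof and mention the pointwise argument only as intuition.
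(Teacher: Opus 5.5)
Your proof is correct and, in the form you say you would present it, follows the paper's argument. Nonnegativity comes from the constraint $n\ge0$ in $\mathcal A^k$. The upper bound comes from testing \eqref{eq:time-discrete-scheme-pressure} against $w=(p^{k+1}-p_H)_+$, removing the boundary term with \eqref{eq:time-discrete-boundary}, and using that $(n^{k+1}-n^k)w>0$ on $\{w>0\}$ to force $w=0$; the pointwise maximum-principle sketch, as you note, is only heuristic and is not needed.
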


\begin{proof}
The nonnegativity follows directly from the admissible set
\(\mathcal A^k\), where \(n\ge0\) is imposed.

To prove the upper bound, set
\[
    w=(p^{k+1}-p_H)_+ .
\]
On the set \(\{w>0\}\), we have \(p^{k+1}>p_H\), hence
\(n^{k+1}>n_H\ge n^k\). Therefore
\[
    (n^{k+1}-n^k)w > 0 .
\]
Multiplying \eqref{eq:time-discrete-scheme-pressure} by \(w\), integrating
over \(\Omega\), and using \eqref{eq:time-discrete-boundary}, we obtain
\[
    \frac1\tau\int_\Omega (n^{k+1}-n^k)w\,dx
    =
    -\int_\Omega n^k|\nabla w|^2\,dx
    -
    \int_\Omega \mathcal M^k w^2\,dx
    \le0 .
\]
The left-hand side is nonnegative, and hence it must vanish. 
We conclude that \(w=0\) and thus
\(p^{k+1}\le p_H\). 
Together with \(n^{k+1}\ge0\) and
\(p^{k+1}=(n^{k+1})^\gamma\), this gives
\[
    0\le p^{k+1}\le p_H,
    \qquad
    0\le n^{k+1}\le n_H .
\]
\end{proof}

\begin{proposition}[Energy stability]
\label{prop:time-discrete-energy}
Assume that \(n^k\ge0\). Let
\((n^{k+1},m^{k+1},q^{k+1})\) be a minimizer of
\eqref{eq:discrete-minimization}. Then
\begin{equation}\label{eq:time-discrete-energy}
\mathcal E(n^{k+1})
+
\frac{1}{2\tau}
\int_\Omega
\frac{|m^{k+1}|^2}{n^k}\,dx
+
\frac{1}{2\tau}
\int_\Omega
\frac{|q^{k+1}|^2}{\mathcal M^k}\,dx
\le
\mathcal E(n^k).
\end{equation}
In particular, \(\mathcal E(n^{k+1})\le\mathcal E(n^k)\).
\end{proposition}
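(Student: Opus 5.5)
The plan is to compare the minimizer with the trivial competitor \((n,m,q)=(n^k,0,0)\). We check admissibility first. Since \(n^k\ge0\), and \(n^k-n^k+\nabla\cdot 0=0\) with \(0\cdot\nu=0\) on \(\partial\Omega\), this triple lies in \(\mathcal A^k\) defined in \eqref{eq:admissible-set}.

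Next we evaluate the discrete Rayleighian \eqref{eq:discrete-rayleighian} at this competitor. The energy increment \(\mathcal E(n^k)-\mathcal E(n^k)\) vanishes. Both dissipation integrals also vanish, by the extended-sense convention \(|z|^2/a:=0\) when \(z=0\). This convention matters precisely on the sets where \(n^k=0\) or \(\mathcal M^k=0\). Hence \(\mathcal J_\tau^k(n^k,0,0)=0\).

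Since \((n^{k+1},m^{k+1},q^{k+1})\) minimizes \(\mathcal J_\tau^k\) over \(\mathcal A^k\), we have
\[
\mathcal J_\tau^k(n^{k+1},m^{k+1},q^{k+1})\le \mathcal J_\tau^k(n^k,0,0)=0 .
\]
Writing out \(\mathcal J_\tau^k\) and moving \(\mathcal E(n^k)\) to the right-hand side gives exactly \eqref{eq:time-discrete-energy}.

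The final claim \(\mathcal E(n^{k+1})\le\mathcal E(n^k)\) follows by dropping the two dissipation terms. This is allowed because both are nonnegative: \(n^k\ge0\), \(\mathcal M^k\ge0\) by \eqref{prop:M}, and the integrands are nonnegative in the extended sense, possibly \(+\infty\). Finiteness of the minimum then forces the minimizer's dissipation terms to be finite.

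The only delicate point is that the inequality must hold in the extended sense. We need \(\mathcal E(n^k)\) finite, which holds for \(n^k\) bounded or at least in \(L^{\gamma+1}\), and \(\mathcal E(n^{k+1})\) bounded below. The latter is true because \(\frac{1}{\gamma+1}s^{\gamma+1}-p_Hs\) is bounded below for \(s\ge0\) on the bounded domain \(\Omega\). With these two facts, the rearrangement is legitimate, and no Euler--Lagrange information is needed.
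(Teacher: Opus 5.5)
Your proof is correct and matches the paper's argument: test minimality against the admissible competitor \((n^k,0,0)\), for which \(\mathcal J_\tau^k\) vanishes, and then rearrange. Your extra remarks on the extended-sense convention and on \(\mathcal E(n^{k+1})\) being bounded below make explicit the details the paper leaves implicit, namely why the rearrangement is legitimate and why the minimizer's dissipation terms are finite.
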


\begin{proof}
Since \((n^k,0,0)\in\mathcal A^k\), the minimizing property gives
\[
\mathcal J_\tau^k(n^{k+1},m^{k+1},q^{k+1})
\le
\mathcal J_\tau^k(n^k,0,0)
=
0.
\]
Using the definition of \(\mathcal J_\tau^k\) gives
\eqref{eq:time-discrete-energy}. The last assertion follows from the
nonnegativity of the two dissipation terms.
\end{proof}



\subsection{Fully discrete finite difference scheme}
\label{subsec:fully-discrete}
\label{sec:fully-discrete}

We now present the fully discrete finite difference scheme. For simplicity,
we give the one-dimensional formulation on \(\Omega=(a,b)\).
The two-dimensional extension used in the computations is stated in
Appendix~\ref{app:2d-extension}.

Let \(h=(b-a)/N\), \(x_{i+\frac12}=a+ih\) for \(0\le i\le N\), and
\(x_i=a+(i-\frac12)h\) for \(1\le i\le N\). We denote by \(n_i^k\) the
cell-centered approximation of \(n(x_i,t_k)\), and set
\(
    p_i^k=(n_i^k)^\gamma .
\)
For a cell-centered grid function \(v=\{v_i\}_{i=1}^N\) and an interface
grid function \(F=\{F_{i+\frac12}\}_{i=0}^N\), define
\begin{equation}\label{eq:dv_df}
    (\delta_h v)_{i+\frac12}
    =
    \frac{v_{i+1}-v_i}{h},
    \qquad
    (d_hF)_i
    =
    \frac{F_{i+\frac12}-F_{i-\frac12}}{h}.
\end{equation}

Given \(n^k\), the new density \(n^{k+1}\) is determined as follows. Let
\begin{equation}\label{eq:fd-flux}
    n_{i+\frac12}^k=\frac{n_i^k+n_{i+1}^k}{2}, \quad
    F_{i+\frac12}^{k+1}
    =
    n_{i+\frac12}^k(\delta_h p^{k+1})_{i+\frac12},
    \quad
    1\le i\le N-1.
\end{equation}
Here \(F\) denotes the pressure-gradient flux \(n\nabla p\), that is, the
negative of the physical flux \(j\).
The no-flux boundary condition is imposed by
\begin{equation}\label{eq:fd-no-flux}
    F_{\frac12}^{k+1}=F_{N+\frac12}^{k+1}=0 .
\end{equation}
Then the fully discrete scheme is
\begin{equation}\label{eq:fd-scheme}
    \frac{n_i^{k+1}-n_i^k}{\tau}
    =
    (d_hF^{k+1})_i
    -
    \mathcal M_i^k\left(p_i^{k+1}-p_H\right),
    \qquad 1\le i\le N,
\end{equation}
where
\begin{equation}\label{eq:fd-reaction-mobility}
    \mathcal M_i^k
    =
    -n_i^k g(p_i^k).
\end{equation}
Here \(p_i^{k+1}=(n_i^{k+1})^\gamma\). 
Since \(g(p_i^k)\le0\) for \(p_i^k\ge0\), if \(n_i^k\ge0\), then
\begin{equation} \label{prop:M_discrete}
	\mathcal M_i^k\ge0,\qquad 1\le i\le N. 
\end{equation}
The fully discrete scheme \eqref{eq:fd-scheme} is a nonlinear system for \(n^{k+1}\), which can be solved
by a damped Newton iteration in the computations. See
Appendix~\ref{app:nonlinear-solver}. 
Below is the fully discrete update of $n^k$ in one time step. 

\begin{center}
\fbox{%
\begin{minipage}{0.92\textwidth}
\textbf{Fully discrete update.}
Given \(n^k\),
\begin{enumerate}
\item Set \(p_i^k=(n_i^k)^\gamma\).
\item Compute \(\mathcal M_i^k=-n_i^k g(p_i^k)\).
\item Solve the nonlinear system \eqref{eq:fd-scheme} for
\(n_i^{k+1}\), with \(p_i^{k+1}=(n_i^{k+1})^\gamma\).
\end{enumerate}
\end{minipage}}
\end{center}

\begin{proposition}[Existence and nonnegativity]
\label{prop:fd-existence-nonnegativity}
Assume that \(n_i^k\ge0\) for \(1\le i\le N\). Then the fully discrete
scheme \eqref{eq:fd-flux}--\eqref{eq:fd-reaction-mobility} admits a
nonnegative solution
\[
    n_i^{k+1}\ge0,\qquad 1\le i\le N .
\]
\end{proposition}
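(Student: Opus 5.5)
Our plan is to obtain the solution as a minimizer of a discrete analogue of the Rayleighian problem \eqref{eq:discrete-minimization}, and then to check that the Euler--Lagrange conditions reproduce \eqref{eq:fd-scheme}. We eliminate the flux and reaction variables in advance. Define on $\{n\in\mathbb R^N: n_i\ge 0\}$ the convex functional
\[
\mathcal J_h(n)=h\sum_{i=1}^N\Big(\tfrac{1}{\gamma+1}n_i^{\gamma+1}-p_Hn_i\Big)+\frac{1}{2\tau}\,\|n-n^k\|_{A^{-1}}^2 .
\]
Here $A=\tau(L+D)$, where $L$ is the weighted Neumann Laplacian $(Lv)_i=-(d_h(n^k_{\cdot+\frac12}\delta_hv))_i$ with zero boundary fluxes, and $D=\mathrm{diag}(\mathcal M_i^k)$. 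Both are symmetric positive semidefinite by \eqref{prop:M_discrete}. The quantity $\|z\|_{A^{-1}}^2$ is understood in the extended sense: it equals $z^TA^{+}z$ if $z\in\mathrm{Range}(A)$ and $+\infty$ otherwise. Equivalently, it is the minimum of $h\sum |m_{i+\frac12}|^2/n^k_{i+\frac12}+h\sum q_i^2/\mathcal M_i^k$ over pairs $(m,q)$ with $n-n^k+d_hm=q$. This is the discrete version of \eqref{eq:discrete-rayleighian}.

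\textbf{Step 1 (existence of a minimizer).} The set where $\mathcal J_h<\infty$ is nonempty, since it contains $n^k$. The functional is lower semicontinuous and coercive, because $n_i^{\gamma+1}$ grows superlinearly and the quadratic term is nonnegative. A minimizer $n^{k+1}\ge0$ therefore exists on the closed convex set $\{n\ge0\}$. Nonnegativity holds by construction.

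\textbf{Step 2 (Euler--Lagrange equations).} For $n$ in the affine set $n^k+\mathrm{Range}(A)$, write $n-n^k=-A\lambda$ with $\lambda$ in the range. The KKT conditions then read
\[
p_i^{k+1}-p_H-\lambda_i-\eta_i=c_i, \qquad \eta_i\ge0,\qquad \eta_in_i^{k+1}=0,
\]
where $c$ lies in $\ker A$. This kernel comes from the affine constraint. Because of the kernel component and the multiplier $\eta$, the identity $\lambda=\mu^{k+1}$ holds only on the positivity set. Substituting $\mu^{k+1}$ into $n-n^k=-A\lambda$ gives exactly \eqref{eq:fd-scheme}, with the fluxes \eqref{eq:fd-flux}--\eqref{eq:fd-no-flux}.

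\textbf{Step 3 (the main obstacle).} The difficulty is to remove the multiplier $\eta$ and the kernel component at cells where $n_i^{k+1}=0$, or where $n^k$ vanishes and the operator degenerates. Because of this, we plan to avoid the variational route for existence and use a topological argument instead.

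Consider the map $\Psi(n)=n-n^k+\tau A_0(n_+^\gamma-p_H)$, where $A_0$ is the same operator $L+D$ and $n_+^\gamma=(\max(n,0))^\gamma$. This map is continuous. Its zeros with $n\ge0$ are exactly the solutions of the scheme.

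First, we show that any zero of the homotopy $\Psi_s$, obtained by replacing $\tau$ with $s\tau$, is nonnegative. Take an index $i$ where $n_i$ attains its minimum, and suppose $n_i<0$. Then $p_i=0$. The flux differences $(d_hF)_i$ are nonnegative, because neighbouring pressures are $\ge0=p_i$ and the weights $n^k_{i\pm\frac12}$ are nonnegative. The reaction term $-\mathcal M_i^k(0-p_H)$ is also nonnegative. Hence $n_i\ge n_i^k\ge0$, which is a contradiction.

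Second, we show a uniform upper bound. Take an index where $n_i$ is maximal. There $(d_hF)_i\le0$ by the same reasoning, and the reaction term is at most $\mathcal M_i^kp_H$. This gives $n_i\le \max_j n^k_j+\tau\max_j\mathcal M_j^kp_H$.

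Therefore all zeros of the homotopy lie in a fixed bounded box. Since $\Psi_0=\mathrm{id}-n^k$ has degree one on this box, homotopy invariance of the Brouwer degree (or a Leray--Schauder argument) yields a zero of $\Psi$, and that zero is nonnegative. We regard this degree argument as the primary proof. The minimization is useful mainly to motivate the scheme and to give uniqueness when $n^k>0$.
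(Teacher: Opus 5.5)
Your degree argument is correct, and it is a genuinely different proof from the paper's. One minor point: compute the degree on an open box strictly containing $[0,C]^N$, since zeros with $n_i=0$ do occur in vacuum cells.

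The paper stays variational, but works in the pressure variable. It minimizes
\[
\mathcal J_h(p)=h\sum_{i=1}^N\Bigl[\tfrac{\gamma}{\gamma+1}p_i^{\frac{\gamma+1}{\gamma}}-n_i^kp_i+\tfrac{\tau}{2}\mathcal M_i^k(p_i-p_H)^2\Bigr]+\tfrac{\tau h}{2}\sum_{i=1}^{N-1}n^k_{i+\frac12}\bigl|(\delta_hp)_{i+\frac12}\bigr|^2
\]
over $\{p\ge0\}$; this functional is convex and coercive. It is essentially the convex dual of your density functional. The degenerate operator $L+D$ now enters as a nonnegative quadratic form rather than through a pseudo-inverse. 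Hence the kernel component you flagged in Step~3 never arises, and the only multiplier sits at cells with $p_i=0$.

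The paper removes that multiplier with exactly the minimum-principle inequality you use for zeros of $\Psi_s$. There $(d_hF)_i\ge0$ and $\mathcal M_i^kp_H\ge0$ make the residual $\le0$, while the KKT condition gives $\ge0$. So the scheme holds in every cell, with $n^{k+1}=(p^{k+1})^{1/\gamma}$.

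Your approach trades convexity for a priori bounds (discrete minimum and maximum principles, uniform along the homotopy) plus homotopy invariance of the Brouwer degree. It never uses the symmetric gradient structure of the discretization, so it would survive nonsymmetric (e.g.\ upwind) fluxes. However, it is nonconstructive and by itself gives no uniqueness.

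The paper's route ties the discrete solution to a convex minimization. Strict convexity of $p\mapsto p^{(\gamma+1)/\gamma}$ then gives, with one more line, uniqueness of the nonnegative solution for every $n^k\ge0$, not only for $n^k>0$ as you suggest. Alternatively, pair the difference of two solutions with the difference of their pressures and use that $L+D$ is positive semidefinite.

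Your Steps~1--2 can simply be dropped.
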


\begin{proof}
We use the pressure variable. For \(p=\{p_i\}_{i=1}^N\), consider the functional
\[
\begin{aligned}
\mathcal J_h(p)
&=
h\sum_{i=1}^N
\left[
\frac{\gamma}{\gamma+1}p_i^{\frac{\gamma+1}{\gamma}}
-
n_i^k p_i
+
\frac{\tau}{2}\mathcal M_i^k(p_i-p_H)^2
\right]  
+
\frac{\tau h}{2}
\sum_{i=1}^{N-1}
n_{i+\frac12}^k
\left|(\delta_h p)_{i+\frac12}\right|^2
\end{aligned}
\]
on the closed convex set
\[
    K_h=\{p\in\mathbb R^N:\ p_i\ge0,\ 1\le i\le N\}.
\]
Since \(n_{i+\frac12}^k\ge0\) and \(\mathcal M_i^k\ge0\), the functional
\(\mathcal J_h\) is convex. Moreover, the term
\(p_i^{(\gamma+1)/\gamma}\) is superlinear on \([0,\infty)\), so
\(\mathcal J_h\) is coercive on \(K_h\). Hence \(\mathcal J_h\) admits a
minimizer \(p^{k+1}\in K_h\).

We now show that this minimizer satisfies the finite difference scheme. Let
\[
R_i(p) = \frac{1}{h}\frac{\partial \mathcal J_h}{\partial p_i}
=
p_i^{1/\gamma}
-
n_i^k
+
\tau\mathcal M_i^k(p_i-p_H)
-
\tau(d_hF)_i .
\]
The variational inequality for the minimizer gives
\(R_i(p^{k+1})=0\) if \(p_i^{k+1}>0\), and
\(R_i(p^{k+1})\ge0\) if \(p_i^{k+1}=0\). In the latter case,
\(p_i^{k+1}\) is a minimum of the nonnegative grid function \(p^{k+1}\),
and hence \((d_hF)_i\ge0\). Therefore
\[
    R_i(p^{k+1})
    =
    -n_i^k-\tau\mathcal M_i^k p_H-\tau(d_hF)_i
    \le0 .
\]
Thus \(R_i(p^{k+1})=0\) for all \(i\). Setting
\(n_i^{k+1}=(p_i^{k+1})^{1/\gamma}\), we obtain
\[
    \frac{n_i^{k+1}-n_i^k}{\tau}
    =
    (d_hF^{k+1})_i
    -
    \mathcal M_i^k(p_i^{k+1}-p_H),
\]
which is exactly \eqref{eq:fd-scheme}. Since \(p^{k+1}\in K_h\), the
solution is nonnegative.
\end{proof}

\begin{proposition}[Homeostatic upper bound]
\label{prop:fd-upper-bound}
Assume that
\[
    0\le n_i^k\le n_H:=p_H^{1/\gamma},
    \qquad 1\le i\le N .
\]
Let \(n^{k+1}\) be a nonnegative solution of
\eqref{eq:fd-flux}--\eqref{eq:fd-reaction-mobility}. Then
\[
    0\le n_i^{k+1}\le n_H,
    \qquad
    0\le p_i^{k+1}\le p_H,
    \qquad 1\le i\le N .
\]
\end{proposition}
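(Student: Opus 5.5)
We will prove the upper bound with a discrete maximum-principle argument, applied at the cell where the new pressure is largest. This is the fully discrete counterpart of the truncation argument in Proposition~\ref{prop:td-positivity-upper-bound}. Nonnegativity \(n_i^{k+1}\ge0\) is part of the hypothesis, so only the upper bound needs proof. Since \(p_i^{k+1}=(n_i^{k+1})^\gamma\) is increasing in \(n_i^{k+1}\ge0\), it is enough to show \(\max_i p_i^{k+1}\le p_H\).

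We argue by contradiction. Suppose \(\max_i p_i^{k+1}>p_H\), and choose an index \(i_0\) with \(p_{i_0}^{k+1}=\max_i p_i^{k+1}\).

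First, the left-hand side of \eqref{eq:fd-scheme} at \(i_0\) is strictly positive. Indeed \(n_{i_0}^{k+1}=(p_{i_0}^{k+1})^{1/\gamma}>p_H^{1/\gamma}=n_H\ge n_{i_0}^k\).

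Next, the discrete divergence term \((d_hF^{k+1})_{i_0}\) is nonpositive.
\begin{itemize}
\item At the right interface \(i_0+\frac12\), either the flux vanishes by the no-flux condition \eqref{eq:fd-no-flux}, or it equals \(n^k_{i_0+\frac12}(p^{k+1}_{i_0+1}-p^{k+1}_{i_0})/h\le0\). This uses \(n^k_{i_0+\frac12}\ge0\) and the maximality of \(p^{k+1}_{i_0}\).
\item At the left interface \(i_0-\frac12\), the flux is \(\ge0\) by the same reasoning.
\end{itemize}
Hence \((d_hF^{k+1})_{i_0}=(F_{i_0+\frac12}-F_{i_0-\frac12})/h\le0\).

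Finally, the reaction term satisfies \(-\mathcal M_{i_0}^k(p_{i_0}^{k+1}-p_H)\le0\), because \(\mathcal M_{i_0}^k\ge0\) by \eqref{prop:M_discrete}.

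Substituting these three facts into \eqref{eq:fd-scheme} gives \(0<\tau^{-1}(n_{i_0}^{k+1}-n_{i_0}^k)\le0\), a contradiction. Therefore \(p_i^{k+1}\le p_H\) for every \(i\), and taking \(\gamma\)-th roots gives \(n_i^{k+1}\le n_H\).

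The only step that needs care is the sign of the flux at the maximum. It relies on \(n^k_{i\pm\frac12}\ge0\), which follows from \(n^k\ge0\). It must also cover the boundary cells \(i_0=1\) and \(i_0=N\), where one of the two fluxes is zero by \eqref{eq:fd-no-flux}. An alternative is to mimic the continuous proof: multiply \eqref{eq:fd-scheme} by \(w_i=(p_i^{k+1}-p_H)_+\), sum over \(i\), and perform a discrete summation by parts. This uses the monotonicity inequality \((\delta_h p)(\delta_h w)\ge(\delta_h w)^2\ge0\). The max-principle route above avoids that inequality, so it is the one we would use.
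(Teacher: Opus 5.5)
Your proof is correct and follows essentially the same route as the paper: a contradiction argument at an index \(i_0\) where \(p^{k+1}\) attains its maximum, using \((d_hF^{k+1})_{i_0}\le0\) (from \(n^k_{i\pm\frac12}\ge0\) and the no-flux condition) together with \(\mathcal M^k_{i_0}\ge0\) to force \(n^{k+1}_{i_0}\le n^k_{i_0}\le n_H\). Your explicit treatment of the boundary cells \(i_0=1\) and \(i_0=N\) is a detail the paper leaves implicit.
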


\begin{proof}
Assume by contradiction that \(\max_i p_i^{k+1}>p_H\), and let \(\ell\)
be such that \(p_\ell^{k+1}=\max_i p_i^{k+1}\). Then
\(n_\ell^{k+1}>n_H\) and \(p_\ell^{k+1}>p_H\). Since \(p_\ell^{k+1}\) is a maximum and
\(n_{i+\frac12}^k\ge0\), we have
\[
    (d_hF^{k+1})_\ell\le0 .
\]
Using the scheme at \(i=\ell\), we get
\[
    \frac{n_\ell^{k+1}-n_\ell^k}{\tau}
    =
    (d_hF^{k+1})_\ell
    -
    \mathcal M_\ell^k(p_\ell^{k+1}-p_H)
    \le0 .
\]
Hence \(n_\ell^{k+1}\le n_\ell^k\le n_H\), contradicting
\(n_\ell^{k+1}>n_H\). Therefore \(n_i^{k+1}\le n_H\) for all \(i\).
Together with nonnegativity and \(p_i^{k+1}=(n_i^{k+1})^\gamma\), this gives
the desired bounds.
\end{proof}

\begin{proposition}[Discrete energy dissipation]
\label{prop:fd-energy-dissipation}
Assume that \(n_i^k\ge0\) for \(1\le i\le N\). Define
\begin{equation}\label{def:modified_E}
\mathcal E_h(n)
=
h\sum_{i=1}^N
\left(
\frac{1}{\gamma+1}n_i^{\gamma+1}
-
p_Hn_i
\right).
\end{equation}
Let \(n^{k+1}\) be a nonnegative solution of
\eqref{eq:fd-flux}--\eqref{eq:fd-reaction-mobility}. Then
\begin{align}
\mathcal E_h(n^{k+1})
\le
\mathcal E_h(n^k).
\label{eq:fd-energy-dissipation}
\end{align}
\end{proposition}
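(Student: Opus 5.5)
The plan is to test the scheme \eqref{eq:fd-scheme} against the discrete chemical potential \(\mu_i^{k+1}:=p_i^{k+1}-p_H\) and use convexity of the energy density. This is the discrete analogue of the proof of Proposition~\ref{prop:energy-dissipation}, with the time derivative handled as in Proposition~\ref{prop:time-discrete-energy}.

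\emph{Step 1: convexity.} The function \(e(s)=\frac{1}{\gamma+1}s^{\gamma+1}-p_Hs\) is convex on \([0,\infty)\), since \(\gamma>1\), and its derivative is \(e'(s)=s^\gamma-p_H\). Hence for nonnegative \(n_i^k\) and \(n_i^{k+1}\),
\[
e(n_i^{k})\ge e(n_i^{k+1})+e'(n_i^{k+1})(n_i^k-n_i^{k+1}).
\]
Rearranging,
\[
e(n_i^{k+1})-e(n_i^k)\le (p_i^{k+1}-p_H)(n_i^{k+1}-n_i^k).
\]
Summing with weight \(h\) gives
\[
\mathcal E_h(n^{k+1})-\mathcal E_h(n^k)\le h\sum_{i=1}^N \mu_i^{k+1}(n_i^{k+1}-n_i^k).
\]

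\emph{Step 2: substitute the scheme.} Replacing \(n_i^{k+1}-n_i^k\) by \(\tau\bigl[(d_hF^{k+1})_i-\mathcal M_i^k\mu_i^{k+1}\bigr]\), the right-hand side becomes
\[
\tau h\sum_{i=1}^N \mu_i^{k+1}(d_hF^{k+1})_i-\tau h\sum_{i=1}^N\mathcal M_i^k(\mu_i^{k+1})^2 .
\]
The second sum is nonpositive by \eqref{prop:M_discrete}.

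\emph{Step 3: summation by parts.} For the first sum, use the boundary values \(F_{\frac12}^{k+1}=F_{N+\frac12}^{k+1}=0\) from \eqref{eq:fd-no-flux}. Summation by parts then gives
\[
h\sum_{i=1}^N \mu_i(d_hF)_i=-h\sum_{i=1}^{N-1}(\delta_h\mu)_{i+\frac12}F_{i+\frac12}.
\]
Since \(\delta_h\mu^{k+1}=\delta_h p^{k+1}\), this equals
\[
-h\sum_{i=1}^{N-1} n^k_{i+\frac12}\bigl|(\delta_h p^{k+1})_{i+\frac12}\bigr|^2\le0,
\]
because \(n^k_{i+\frac12}\ge0\).

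Combining the three steps yields \eqref{eq:fd-energy-dissipation}. In fact it yields the stronger inequality with the two dissipation sums on the left-hand side, which mirrors \eqref{eq:time-discrete-energy}.

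The only delicate point is the discrete summation by parts. One must track the index ranges carefully so that the boundary fluxes vanish exactly and no boundary remainder is left over. The convexity step needs only \(n_i^{k+1}\ge0\), which is assumed, so no upper bound from Proposition~\ref{prop:fd-upper-bound} is required.
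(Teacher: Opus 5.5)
Your proposal is correct and follows essentially the same route as the paper's proof: the convexity inequality $e(n_i^{k+1})-e(n_i^k)\le (p_i^{k+1}-p_H)(n_i^{k+1}-n_i^k)$, testing the scheme with $\tau h(p_i^{k+1}-p_H)$, and summation by parts using the no-flux condition. Both arguments conclude from $n^k_{i+\frac12}\ge0$ and $\mathcal M_i^k\ge0$, and both obtain the stronger inequality that retains the two dissipation sums.
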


\begin{proof}
Let
\[
    e(n)=\frac{1}{\gamma+1}n^{\gamma+1}-p_Hn .
\]
Since \(e\) is convex and \(e'(n)=n^\gamma-p_H\), we have
\[
    e(n_i^{k+1})-e(n_i^k)
    \le
    (p_i^{k+1}-p_H)(n_i^{k+1}-n_i^k).
\]
Multiplying \eqref{eq:fd-scheme} by
\(\tau h(p_i^{k+1}-p_H)\), summing over \(i\), and using the above
inequality, we obtain
\begin{align}
\mathcal E_h(n^{k+1})-\mathcal E_h(n^k)
&\le
\tau h
\sum_{i=1}^N
(p_i^{k+1}-p_H)(d_hF^{k+1})_i  
-
\tau h
\sum_{i=1}^N
\mathcal M_i^k|p_i^{k+1}-p_H|^2 . \label{prop:estimate1}
\end{align}
By summation by parts and the no-flux condition \eqref{eq:fd-no-flux},
\begin{align}
h\sum_{i=1}^N
(p_i^{k+1}-p_H)(d_hF^{k+1})_i
&=
-h\sum_{i=1}^{N-1}
F_{i+\frac12}^{k+1}
(\delta_hp^{k+1})_{i+\frac12} = 
-h\sum_{i=1}^{N-1}
n_{i+\frac12}^k
\left|
(\delta_hp^{k+1})_{i+\frac12}
\right|^2 . \label{prop:estimate2}
\end{align}
Here the last equality follows from the definition of \(F_{i+\frac12}^{k+1}\)
in \eqref{eq:fd-flux}.
Combining \eqref{prop:estimate1} and \eqref{prop:estimate2} yields
\begin{align}
\mathcal E_h(n^{k+1})
&+
\tau h
\sum_{i=1}^{N-1}
n_{i+\frac12}^k
\left|
(\delta_hp^{k+1})_{i+\frac12}
\right|^2
+
\tau h
\sum_{i=1}^N
\mathcal M_i^k
\left|
p_i^{k+1}-p_H
\right|^2
\le
\mathcal E_h(n^k).
\end{align}
Dropping the nonnegative
dissipation terms yields \(\mathcal E_h(n^{k+1})\le \mathcal E_h(n^{k})\).
\end{proof}

Combining Propositions~\ref{prop:fd-existence-nonnegativity},
\ref{prop:fd-upper-bound}, and \ref{prop:fd-energy-dissipation}, we immediately get the following corollary. 
\begin{corollary}[Structure-preserving properties]
\label{cor:fd-structure-preserving}
If \(0\le n_i^0\le n_H\) for \(1\le i\le N\), then for all \(k\),
\[
0\le n_i^k\le n_H,\qquad 0\le p_i^k\le p_H,
\]
and \(\mathcal E_h(n^k)\) \eqref{def:modified_E} is nonincreasing.
\end{corollary}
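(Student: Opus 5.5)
The plan is an induction on the time level \(k\), with each step supplied by one of the three preceding propositions. The induction hypothesis at level \(k\) is
\[
0\le n_i^k\le n_H,\qquad 1\le i\le N .
\]
At \(k=0\) this is exactly the assumption on \(n^0\). The scheme also has to be well defined at every level, i.e. there must be some \(n^{k+1}\) to which the bounds apply. So I will take the sequence \((n^k)_k\) to be the one constructed step by step from the solutions given by Proposition~\ref{prop:fd-existence-nonnegativity}. Equivalently, the statement can be read as holding for any sequence of nonnegative solutions of \eqref{eq:fd-flux}--\eqref{eq:fd-reaction-mobility}.

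For the inductive step, assume the hypothesis at level \(k\).

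\begin{enumerate}
\item Since \(n_i^k\ge0\), Proposition~\ref{prop:fd-existence-nonnegativity} gives a solution \(n^{k+1}\) of the scheme with \(n_i^{k+1}\ge0\).
\item Since also \(n_i^k\le n_H\), this nonnegative solution satisfies the hypotheses of Proposition~\ref{prop:fd-upper-bound}. That proposition gives \(0\le n_i^{k+1}\le n_H\) and \(0\le p_i^{k+1}\le p_H\), so the hypothesis holds at level \(k+1\).
\item For every level, the pressure bound \(0\le p_i^k\le p_H\) follows from the density bound through \(p_i^k=(n_i^k)^\gamma\), because \(s\mapsto s^\gamma\) is monotone on \([0,\infty)\).
\end{enumerate}

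For the energy, apply Proposition~\ref{prop:fd-energy-dissipation} at each step, using \(n_i^k\ge0\) and the fact that \(n^{k+1}\) is a nonnegative solution. This gives \(\mathcal E_h(n^{k+1})\le\mathcal E_h(n^k)\) for all \(k\), so \(\mathcal E_h(n^k)\) is nonincreasing.

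The only subtle point is the well-definedness issue raised above. Proposition~\ref{prop:fd-existence-nonnegativity} gives existence of a nonnegative solution but not uniqueness. The corollary should therefore be understood as applying to any sequence of nonnegative discrete solutions, for instance the one obtained from the minimizer of \(\mathcal J_h\) at each step. Since Propositions~\ref{prop:fd-upper-bound} and \ref{prop:fd-energy-dissipation} hold for every nonnegative solution, whichever one is chosen, no uniqueness argument is needed. Beyond this remark, the induction is a direct chaining of the earlier results.
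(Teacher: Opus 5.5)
Your proposal is correct and matches the paper, which obtains this corollary by directly chaining Propositions~\ref{prop:fd-existence-nonnegativity}, \ref{prop:fd-upper-bound}, and \ref{prop:fd-energy-dissipation} step by step in time, exactly as in your induction. Your uniqueness caveat is harmless but not needed: any nonnegative solution of the scheme satisfies the first-order optimality condition of the strictly convex functional $\mathcal J_h$ on $K_h$, so it is the unique minimizer, and the discrete trajectory is therefore uniquely determined.
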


\subsection{Fixed-grid stiff-pressure limiting structure}
\label{subsec:large-gamma-structure}
\label{sec:ap}

We now analyze the fully discrete scheme in the stiff-pressure regime
\(\gamma\to\infty\). The mesh size \(h\) and the time step \(\tau\) are kept
fixed throughout this subsection. 

To emphasize the dependence on \(\gamma\), in this subsection we write
\[
    n_{\gamma,i}^k,\qquad
    p_{\gamma,i}^k=(n_{\gamma,i}^k)^\gamma
\]
for the solution of the fully discrete scheme
\eqref{eq:fd-scheme}--\eqref{eq:fd-reaction-mobility}. The corresponding
numerical flux and reaction mobility are denoted by
\(F_{\gamma, i+\frac12}^{k+1}\) and \(\mathcal M_{\gamma,i}^k\), respectively. 
They are defined by the same formulas as in \eqref{eq:fd-flux} and
\eqref{eq:fd-reaction-mobility}, with \(n_i^k,p_i^k\) replaced by
\(n_{\gamma,i}^k,p_{\gamma,i}^k\). The no-flux boundary condition is imposed as
before.

\begin{proposition}[Fixed-grid stiff-pressure limiting structure]
\label{prop:fixed-grid-ap-structure}
Assume that $G$ satisfies \eqref{eq:G_p_cond}-\eqref{eq:G_p_ineq}. Let \(h\), \(\tau\), and the total time steps 
\(K\ge1\) be fixed. 
Assume that, for any $\gamma>1$, the initial data satisfy
\[
    0\le p_{\gamma,i}^0\le p_H,
    \qquad 1\le i\le N,
\]
and that, up to a subsequence,
\[
    n_{\gamma,i}^{0}\to n_i^0,
    \qquad
    p_{\gamma,i}^{0}\to p_i^0,
    \qquad 1\le i\le N .
\]
Then there exists a further subsequence, still denoted by \(\gamma\), such
that, for all \(0\le k\le K\),
\[
    n_{\gamma,i}^{k}\to n_i^k,
    \qquad
    p_{\gamma,i}^{k}\to p_i^k,
    \qquad 1\le i\le N .
\]
For every \(0\le k\le K\), the limiting variables satisfy the discrete counterpart  of \eqref{eq:n_p_limit}, i.e.
\begin{equation}
    0\le n_i^{k}\le1,
    \qquad
    0\le p_i^{k}\le p_H,
    \qquad
    p_i^{k}(1-n_i^{k})=0 .
\label{eq:limit-density-pressure-complementarity}
\end{equation}
Moreover, for \(0\le k\le K-1\), they satisfy the limiting discrete density
equation
\begin{equation}
    \frac{n_i^{k+1}-n_i^{k}}{\tau}
    =
    (d_hF^{k+1})_i
    -
    \mathcal M_i^k
    \left(p_i^{k+1}-p_H\right),
    \qquad 1\le i\le N,
\label{eq:limit-density-balance}
\end{equation}
where \(F^{k+1}\) and \(\mathcal M^k\) are defined by the same formulas as
\eqref{eq:fd-flux} and \eqref{eq:fd-reaction-mobility}, with
\((n^k,p^k,n^{k+1},p^{k+1})\) replaced by the limiting variables. 
\end{proposition}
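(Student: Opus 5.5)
The plan is to use the uniform bounds from Corollary~\ref{cor:fd-structure-preserving}, extract convergent subsequences by finite-dimensional compactness, and then pass to the limit in each relation separately. First I will make sure the sequences are well defined. For each fixed \(\gamma\), Proposition~\ref{prop:fd-existence-nonnegativity} gives a nonnegative solution \(n_\gamma^{k+1}\) at every step. Proposition~\ref{prop:fd-upper-bound} then gives \(0\le p_{\gamma,i}^k\le p_H\), and hence \(0\le n_{\gamma,i}^k\le p_H^{1/\gamma}\). These hold for all \(0\le k\le K\) by induction, as in Corollary~\ref{cor:fd-structure-preserving}. Since \(p_H^{1/\gamma}\to1\), the bound on \(n\) is uniform in \(\gamma\) for \(\gamma\) large, say \(0\le n_{\gamma,i}^k\le\max(1,p_H)\).

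\textbf{Compactness.} The vectors \((n_{\gamma,i}^k,p_{\gamma,i}^k)_{1\le i\le N,\,0\le k\le K}\) lie in a bounded subset of \(\mathbb R^{2N(K+1)}\). Starting from the subsequence along which the initial data converge, Bolzano--Weierstrass gives a further subsequence along which every component converges. The number of components is finite because \(h,\tau,K\) are fixed, so no diagonal argument is needed. Passing to the limit in the bounds gives \(0\le p_i^k\le p_H\) and \(0\le n_i^k\le \lim_\gamma p_H^{1/\gamma}=1\).

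\textbf{Complementarity.} This is the step requiring care, because the limit \(p_i^k\) is not \((n_i^k)^\gamma\) for any finite \(\gamma\). I will use the relation \(n_{\gamma,i}^k=(p_{\gamma,i}^k)^{1/\gamma}\) instead. If \(p_i^k>0\), then \(p_i^k/2\le p_{\gamma,i}^k\le p_H\) for \(\gamma\) large along the subsequence. It follows that
\[
(p_i^k/2)^{1/\gamma}\le n_{\gamma,i}^k\le p_H^{1/\gamma},
\]
and both bounds tend to \(1\), so \(n_i^k=1\). If \(p_i^k=0\), the product \(p_i^k(1-n_i^k)\) vanishes trivially. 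In either case \(p_i^k(1-n_i^k)=0\), which proves \eqref{eq:limit-density-pressure-complementarity}.

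\textbf{Passing to the limit in the scheme.} The left-hand side of \eqref{eq:fd-scheme} converges componentwise. The flux \(F_{\gamma,i+\frac12}^{k+1}\) is a polynomial in \(n_\gamma^k\) and \(p_\gamma^{k+1}\), so it converges to the flux formed from the limiting variables. The reaction mobility \(\mathcal M_{\gamma,i}^k=-n_{\gamma,i}^k g(p_{\gamma,i}^k)\) converges to \(-n_i^k g(p_i^k)\) by the continuity of \(g\) on \([0,p_H]\), which was noted after \eqref{eq:g-secant}. Here it matters that \(g\), and hence \(G\), does not depend on \(\gamma\). Taking limits term by term in \eqref{eq:fd-scheme}, with the no-flux condition \eqref{eq:fd-no-flux} preserved, yields \eqref{eq:limit-density-balance} for every \(0\le k\le K-1\).
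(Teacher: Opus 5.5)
Your proposal is correct and follows the paper's proof: the same three steps appear there. These are uniform bounds from the bound-preserving results, finite-dimensional compactness, and termwise passage to the limit in the scheme using continuity of $g$. The only difference is cosmetic: you get $p_i^k(1-n_i^k)=0$ by a case split on $p_i^k>0$ (squeezing $n_{\gamma,i}^k=(p_{\gamma,i}^k)^{1/\gamma}$ to $1$), whereas the paper uses the uniform convergence $\sup_{0\le a\le p_H}|a(1-a^{1/\gamma})|\to0$.
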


\begin{proof}
The bound-preserving property gives, uniformly in \(\gamma\),
\[
    0\le n_{\gamma,i}^{k}\le p_H^{1/\gamma},
    \qquad
    0\le p_{\gamma,i}^{k}\le p_H,
    \qquad 1\le i\le N,\quad 0\le k\le K .
\]
Since \(h\), \(\tau\), \(N\), and \(K\) are fixed, the grid-time space is
finite dimensional. Hence, up to a subsequence,
\[
    n_{\gamma,i}^{k}\to n_i^k,
    \qquad
    p_{\gamma,i}^{k}\to p_i^k,
    \qquad 1\le i\le N,\quad 0\le k\le K .
\]
Passing to the limit in the bounds gives
\[
    0\le n_i^k\le1,
    \qquad
    0\le p_i^k\le p_H .
\]


The complementarity relation follows directly from the pressure law. Since
\(p_{\gamma,i}^k=(n_{\gamma,i}^k)^\gamma\), we have
\[
p_{\gamma,i}^k(1-n_{\gamma,i}^k)
=
p_{\gamma,i}^k\left(1-(p_{\gamma,i}^k)^{1/\gamma}\right).
\]
Using the uniform bound \(0\le p_{\gamma,i}^k\le C\), one obtains
\[
\sup_{0\le a\le C}\left|a(1-a^{1/\gamma})\right|\to0,
\qquad \gamma\to\infty.
\]
Hence
\[
p_{\gamma,i}^k(1-n_{\gamma,i}^k)\to0, \quad \gamma\to\infty. 
\]
Passing to the limit gives
\[
p_i^k(1-n_i^k)=0.
\]

It remains to pass to the limit in the scheme. Since the difference
operators are finite-dimensional linear operators, we have
\[
    F_{\gamma,i+\frac12}^{k+1}
    \to
    n_{i+\frac12}^{k}
    (\delta_h p^{k+1})_{i+\frac12}
    =
    F_{i+\frac12}^{k+1}.
\]
For the mobility, using the continuous function \(g\) defined in
\eqref{eq:g-secant}, we have
\[
    \mathcal M_{\gamma,i}^k
    =
    -n_{\gamma,i}^k g(p_{\gamma,i}^k)
    \to
    -n_i^k g(p_i^k)
    =
    \mathcal M_i^k .
\]
Passing to the limit in the fully discrete scheme gives
\eqref{eq:limit-density-balance}. 
\end{proof}

\paragraph{Consistency with the Hele--Shaw limit \eqref{eq:HS_limit_pressure}.}
We next rewrite the limiting scheme in a discrete pressure-complementarity
form. Define the linearly implicit approximation of the growth rate by
\begin{equation}
G_{i,\mathrm{lin}}^k(s)
:=
g(p_i^k)(s-p_H).
\label{eq:growth-rate-linearization}
\end{equation}
This is the secant approximation of \(G(\cdot)\) anchored at \(p_H\), with
the tangent value used at \(p_i^k=p_H\). It satisfies
\[
G_{i,\mathrm{lin}}^k(p_H)=0,\qquad
G_{i,\mathrm{lin}}^k(p_i^k)=G(p_i^k),\qquad
(s-p_H)G_{i,\mathrm{lin}}^k(s)\le0 .
\]
With the notation in \eqref{eq:growth-rate-linearization}, the reaction
source in the limiting scheme is written as
\(n_i^kG_{i,\mathrm{lin}}^k(s)\). Indeed, by \eqref{eq:fd-reaction-mobility}, we have
\[
-\mathcal M_i^k(s-p_H)
=
n_i^kG_{i,\mathrm{lin}}^k(s).
\]
Then the limiting discrete density equation \eqref{eq:limit-density-balance}
becomes
\begin{equation}
\frac{n_i^{k+1}-n_i^k}{\tau}
=
(d_hF^{k+1})_i
+
n_i^kG_{i,\mathrm{lin}}^k(p_i^{k+1}).
\label{eq:limit-density-balance-source}
\end{equation}

Combining \eqref{eq:limit-density-balance-source} with the complementarity relation
\(p_i^{k+1}(1-n_i^{k+1})=0\) in \eqref{eq:limit-density-pressure-complementarity} gives 
\begin{equation}
p_i^{k+1}
\left[
\frac{1-n_i^k}{\tau}
-
(d_hF^{k+1})_i
-
n_i^kG_{i,\mathrm{lin}}^k(p_i^{k+1})
\right]
=0.
\label{eq:limit-pressure-complementarity}
\end{equation}
To identify the discrete pressure equation in saturated cells, set
\(\eta_{i+\frac12}^k:=1-n_{i+\frac12}^k\). 
Since
\[
(d_hF^{k+1})_i
=
(\delta_h^2p^{k+1})_i
-
\bigl(d_h(\eta^k\delta_hp^{k+1})\bigr)_i,
\]
we can rewrite
\eqref{eq:limit-pressure-complementarity} as
\[
p_i^{k+1}
\left[
\frac{1-n_i^k}{\tau}
-
(\delta_h^2p^{k+1})_i
+
\bigl(d_h(\eta^k\delta_hp^{k+1})\bigr)_i
-
n_i^kG_{i,\mathrm{lin}}^k(p_i^{k+1})
\right]
=0.
\]
If \(p_i^{k+1}>0\) and the neighboring old-time cells are saturated, namely
\(n_{i-1}^k=n_i^k=n_{i+1}^k=1\), then \(1-n_i^k=0\) and
\(\eta_{i-\frac12}^k=\eta_{i+\frac12}^k=0\). Hence the above relation reduces
to
\begin{equation}
(\delta_h^2p^{k+1})_i
+
G_{i,\mathrm{lin}}^k(p_i^{k+1})
=
0.
\label{eq:limit-discrete-pressure-equation}
\end{equation}
This is the finite difference Hele--Shaw-type pressure equation obtained from
the fixed-grid stiff-pressure limit. If \(G\) is linear, then
\(G_{i,\mathrm{lin}}^k(s)=G(s)\), and
\eqref{eq:limit-discrete-pressure-equation} becomes the standard finite
difference form of
\[
\Delta p+G(p)=0
\qquad \text{in } \{p>0\}.
\]
For nonlinear \(G\), \(G_{i,\mathrm{lin}}^k\) is the secant linearization
anchored at \(p_H\), with its tangent limit used when \(p_i^k=p_H\).

\paragraph{Consistency with the interface motion \eqref{eq:HS_limit_velocity}.}
We finally discuss how the limiting difference equation reflects the
free-boundary motion.  For simplicity, consider a one-dimensional
symmetric patch on the interval \((0,b)\), with symmetry at \(x=0\) and with
the right free boundary \(R(t)<b\). In the Hele--Shaw limit, the density has
the patch form
\begin{equation}
n(x,t)=
\begin{cases}
1, & 0<x<R(t),\\
0, & R(t)<x<b,
\end{cases}
\label{eq:interface-patch-density}
\end{equation}
and the pressure satisfies
\[
p(x,t)>0 \quad \text{for } 0<x<R(t),
\qquad
p(x,t)=0 \quad \text{for } R(t)<x<b .
\]
The pressure is continuous across the free boundary, so \(p(R(t),t)=0\). The
left endpoint is the symmetry point, and hence \(p_x(0,t)=0\).

In this symmetric setting, the occupied length on \((0,b)\) is exactly
\(R(t)\). Therefore the right boundary velocity can be obtained from the time
variation of the total mass
\[
R'(t)
=
\frac{d}{dt}\int_0^b n(x,t)\,dx .
\]
The same idea has a direct discrete analogue. Define the discrete total
mass on \((0,b)\) by
\[
m_h^k:=h\sum_{j=1}^{N}n_j^k .
\]
In the stiff-pressure limit, \(m_h^k\) approximates the occupied length.
Hence the mesh-scale right boundary velocity is naturally defined by
\begin{equation}
V_h^{k+1}
:=
\frac{m_h^{k+1}-m_h^k}{\tau}
=
h\sum_{j=1}^{N}\frac{n_j^{k+1}-n_j^k}{\tau}.
\label{eq:interface-discrete-velocity}
\end{equation}

Summing \eqref{eq:limit-density-balance-source} over all grid indices and
using the definition \eqref{eq:interface-discrete-velocity}, we obtain
\begin{equation}
V_h^{k+1}
=
h\sum_{j=1}^{N}(d_hF^{k+1})_j
+
h\sum_{j=1}^{N}
n_j^kG_{j,\mathrm{lin}}^k\bigl(p_j^{k+1}\bigr).
\label{eq:interface-summed-relation}
\end{equation}
Using the definition of \(d_h\) in \eqref{eq:dv_df}, we have
\[
h\sum_{j=1}^{N}(d_hF^{k+1})_j
=
F_{N+\frac{1}{2}}^{k+1}
-
F_{\frac{1}{2}}^{k+1}.
\]
The left boundary is the symmetry point, and the right computational boundary
is taken far enough with no flux. Therefore
\[
F_{\frac{1}{2}}^{k+1}
=
F_{N+\frac{1}{2}}^{k+1}
=
0.
\]
Thus \eqref{eq:interface-summed-relation} reduces to the global discrete
relation
\begin{equation}
V_h^{k+1}
=
h\sum_{j=1}^{N}
n_j^kG_{j,\mathrm{lin}}^k\bigl(p_j^{k+1}\bigr).
\label{eq:interface-global-relation}
\end{equation}

We compare \eqref{eq:interface-global-relation} with the continuous
Hele--Shaw velocity law \eqref{eq:HS_limit_velocity}. In the continuous limit, the pressure satisfies \eqref{eq:HS_limit_pressure},  the Neumann boundary condition at $x=0$ and the Dirichlet boundary condition \eqref{eq:HS_limit_boundary} at the free boundary $x=R(t)$. That is, 
\[
-p_{xx}=G(p),\qquad 0<x<R(t),
\qquad
p_x(0,t)=0,\qquad p(R(t),t)=0 .
\]
Integrating over \((0,R(t))\) gives
\[
-p_x(R(t),t)+p_x(0,t)
=
\int_0^{R(t)}G\bigl(p(x,t)\bigr)\,dx .
\]
Using \(p_x(0,t)=0\) and the velocity law in \eqref{eq:HS_limit_velocity}, we obtain
\begin{equation}
R'(t)
=
-p_x(R(t),t)
=
\int_0^{R(t)}G\bigl(p(x,t)\bigr)\,dx
=
\int_0^b n(x,t)G\bigl(p(x,t)\bigr)\,dx .
\label{eq:interface-continuous-source}
\end{equation}
Here the last equality follows from the patch structure
\eqref{eq:interface-patch-density}. Therefore the global discrete relation \eqref{eq:interface-global-relation}  is consistent with
the continuous Hele--Shaw interface identity \eqref{eq:interface-continuous-source} in this setting.


\section{Numerical experiments}
\label{sec:numerical-experiments}
\label{sec:numerics}

In this section, we test the proposed fully discrete scheme. We first test the
degenerate diffusion part and the accuracy of the growth model. We then focus
on the fixed-grid stiff-pressure limit \(\gamma\to\infty\), and finally present
two-dimensional examples with topology changes.

Unless otherwise stated, homogeneous no-flux boundary conditions are imposed. We use \(G(p)=0\) for
the pure diffusion Barenblatt benchmark, and choose
\[
G(p)=1-p,\qquad p_H=1,
\]
for all tests involving pressure-dependent growth. This choice satisfies \eqref{eq:G_p_cond} and \eqref{eq:G_p_ineq}, and the corresponding homeostatic density is \(n_H=1\).

\subsection{Accuracy and structure-preserving tests}
\label{subsec:baseline-validation}

\paragraph{Barenblatt benchmark for degenerate diffusion.}
When \(G(p)=0\), the equation \eqref{eq:tumor-model} in 1D becomes the porous medium equation
\begin{equation}
\partial_t n
=
\partial_x\bigl(n\partial_x n^\gamma\bigr)
=
\frac{\gamma}{\gamma+1}\partial_{xx}n^{\gamma+1}.
\label{eq:pme-barenblatt}
\end{equation}
This benchmark checks the degenerate diffusion part of the scheme, including
compact support, mass conservation, and energy dissipation.

Equation~\eqref{eq:pme-barenblatt} admits a time-shifted Barenblatt solution.
With \(\kappa=\gamma/(\gamma+1)\), \(\beta=1/(\gamma+2)\), and
\(s=\kappa(t+t_0)\), it is given by
\begin{equation}
n_{\rm ex}(x,t)
=
s^{-\beta}
\left(
C
-
\frac{\beta\gamma}{2(\gamma+1)}
\frac{x^2}{s^{2\beta}}
\right)_+^{1/\gamma},
\qquad
(z)_+=\max\{z,0\}.
\label{eq:barenblatt-solution}
\end{equation}
We take \(n_0(x)=n_{\rm ex}(x,0)\). For the profile comparison, we use \(\Omega=[-5,5]\),
\(h=1/64\), \(\tau=0.01\), \(T=1\), and \(t_0=2\). Two values of
\(\gamma\) are tested:
\(C=1\) for \(\gamma=3\), and \(C=0.1\) for
\(\gamma=20\), so that the support remains away from the boundary.
Figure~\ref{fig:barenblatt-profile} shows that the numerical solution agrees
well with the Barenblatt profile at \(T=1\).

\begin{figure}[htp]
  \centering
  \includegraphics[width=0.9\textwidth]{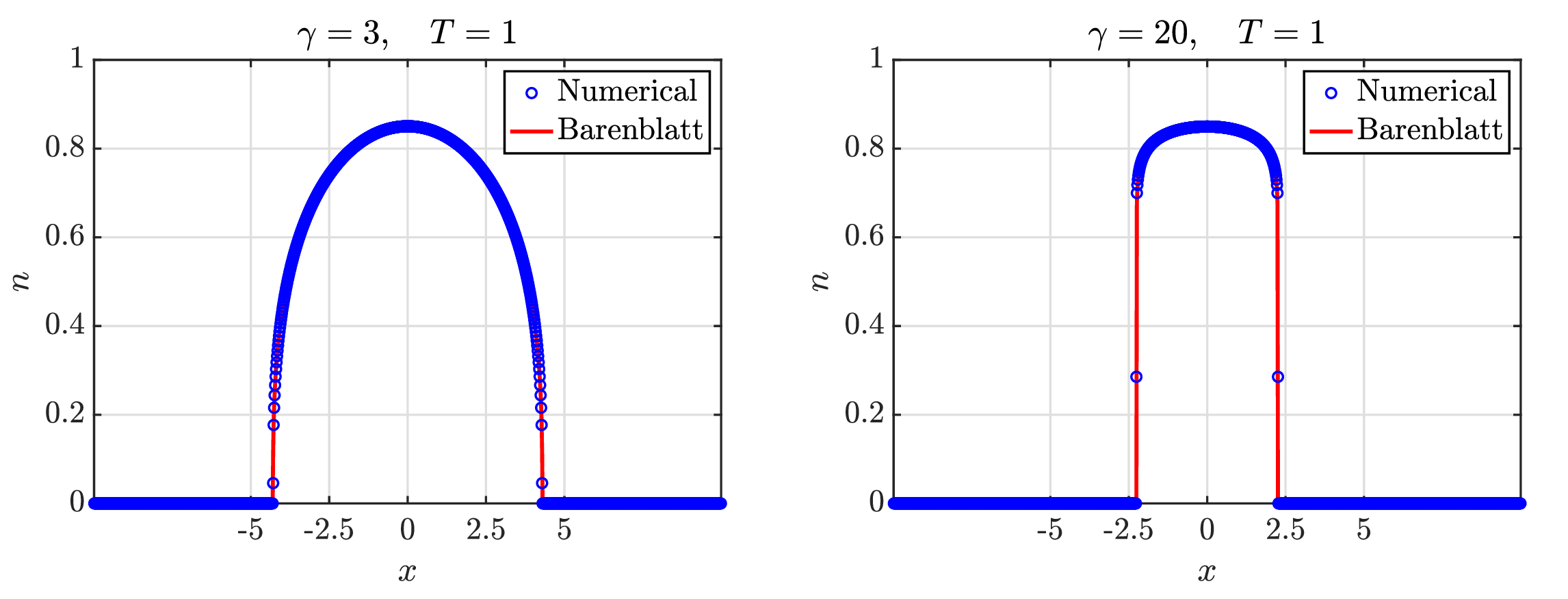}
  \caption{Comparison between the numerical solution and the Barenblatt
  profile for the porous medium equation \eqref{eq:pme-barenblatt}. Left: \(\gamma=3\), \(C=1\). Right: \(\gamma=20\), \(C=0.1\).}
  \label{fig:barenblatt-profile}
\end{figure}

Since \(G(p)=0\), the total mass is conserved and the reaction mobility
vanishes. The linear shift in the modified energy is therefore irrelevant,
and we plot the standard porous-medium energy
\[
E_h(t^k)
=
h\sum_{i=1}^{N}\frac{(n_i^k)^{\gamma+1}}{\gamma+1}.
\]
We define \(m_h(t^k):=h\sum_{i=1}^{N}n_i^k\), and monitor the relative mass
variation and the normalized energy,
\[
\frac{m_h(t^k)-m_h(0)}{m_h(0)},
\qquad
\frac{E_h(t^k)}{E_h(0)}.
\]
For the long-time conservation and dissipation diagnostics, we extend the
computational interval to \([-10,10]\) and the final time to \(T=100\), so
that the support remains away from the boundary. Figure~\ref{fig:barenblatt-structure} shows that the mass variation stays at
round-off level and that the normalized energy is nonincreasing.

\begin{figure}[htp]
  \centering
  \includegraphics[width=0.93\textwidth]{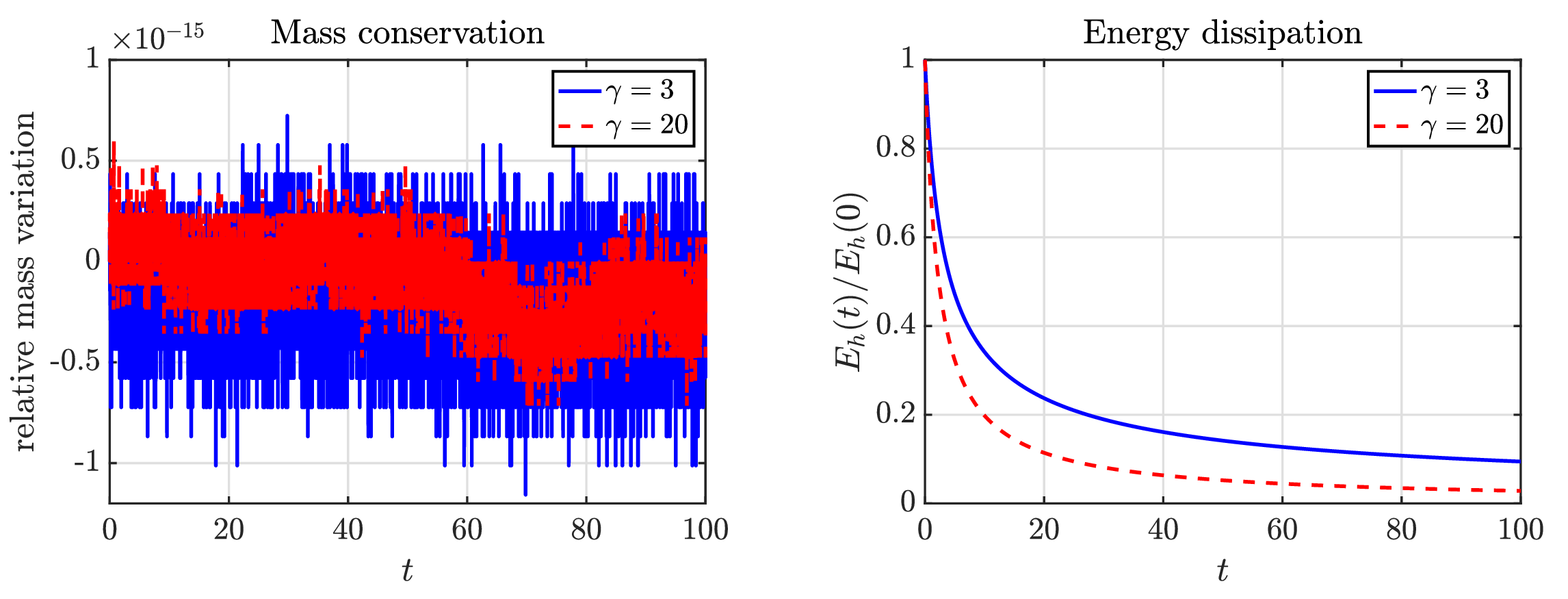}
  \caption{Mass conservation and energy dissipation for the Barenblatt
  test. Left: relative mass variation. Right: normalized energy \(E_h(t)/E_h(0)\). }
  \label{fig:barenblatt-structure}
\end{figure}

\paragraph{Accuracy test.} We now test the convergence behavior of the fully discrete scheme using the
same Barenblatt-type initial density as in the preceding benchmark. Both the
pure diffusion case and the pressure-dependent growth case specified at the
beginning of this section are considered.

Let \(n_{h,\tau,i}^k\) be the numerical density computed with mesh size
\(h\) and time step \(\tau\). The error reported in Figure~\ref{fig:error}
is defined by
\[
\mathcal E_{h,\tau}
=
\max_{k}
\left\{
h\sum_{i=1}^{N}
\left|
n_{h,\tau,i}^k
-
n_{\rm ref}(x_i,t^k)
\right|
\right\}.
\]
Thus the error is measured in the discrete \(L^1\) norm in space and then
maximized over time.

In the spatial refinement test, \(n_{\rm ref}\) is the analytical Barenblatt
solution for \(G(p)=0\) and the finest-grid numerical solution, interpolated
to the corresponding cell centers, for \(G(p)=1-p\).
In the temporal refinement test, \(n_{\rm ref}\) is computed using the
smallest time step on the same spatial grid, so that the reported error
isolates the temporal discretization error on a fixed mesh rather than the
total error to the exact solution.
We fix \(\tau=10^{-5}\) for
the spatial refinement and \(N=4096\) for the temporal refinement. The
observed results in Figure~\ref{fig:error} show a consistent decrease
in the error under both refinements, with rates close to one.

\begin{figure}[htp]
  \centering
  \includegraphics[width=0.47\textwidth]{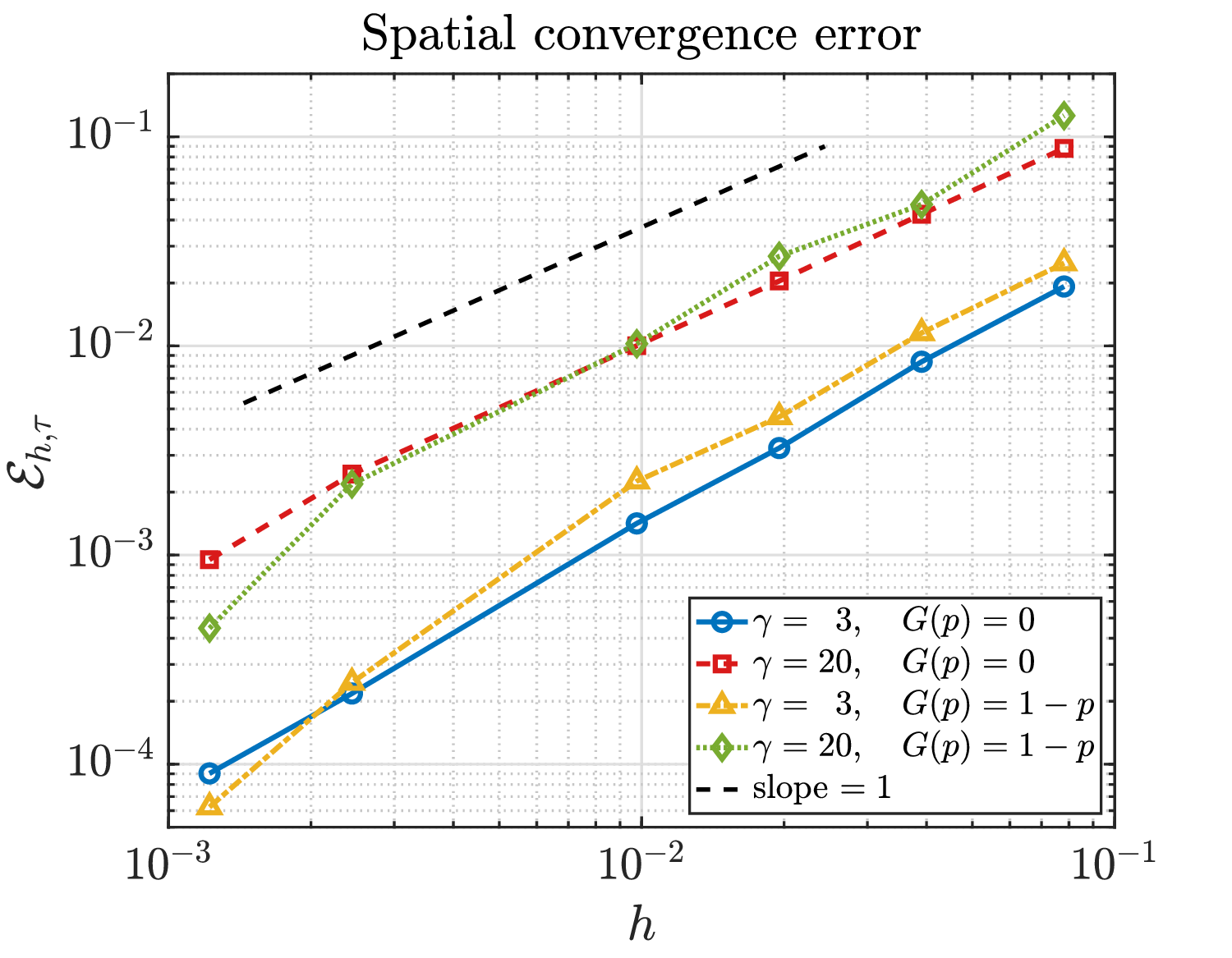}
  \includegraphics[width=0.47\textwidth]{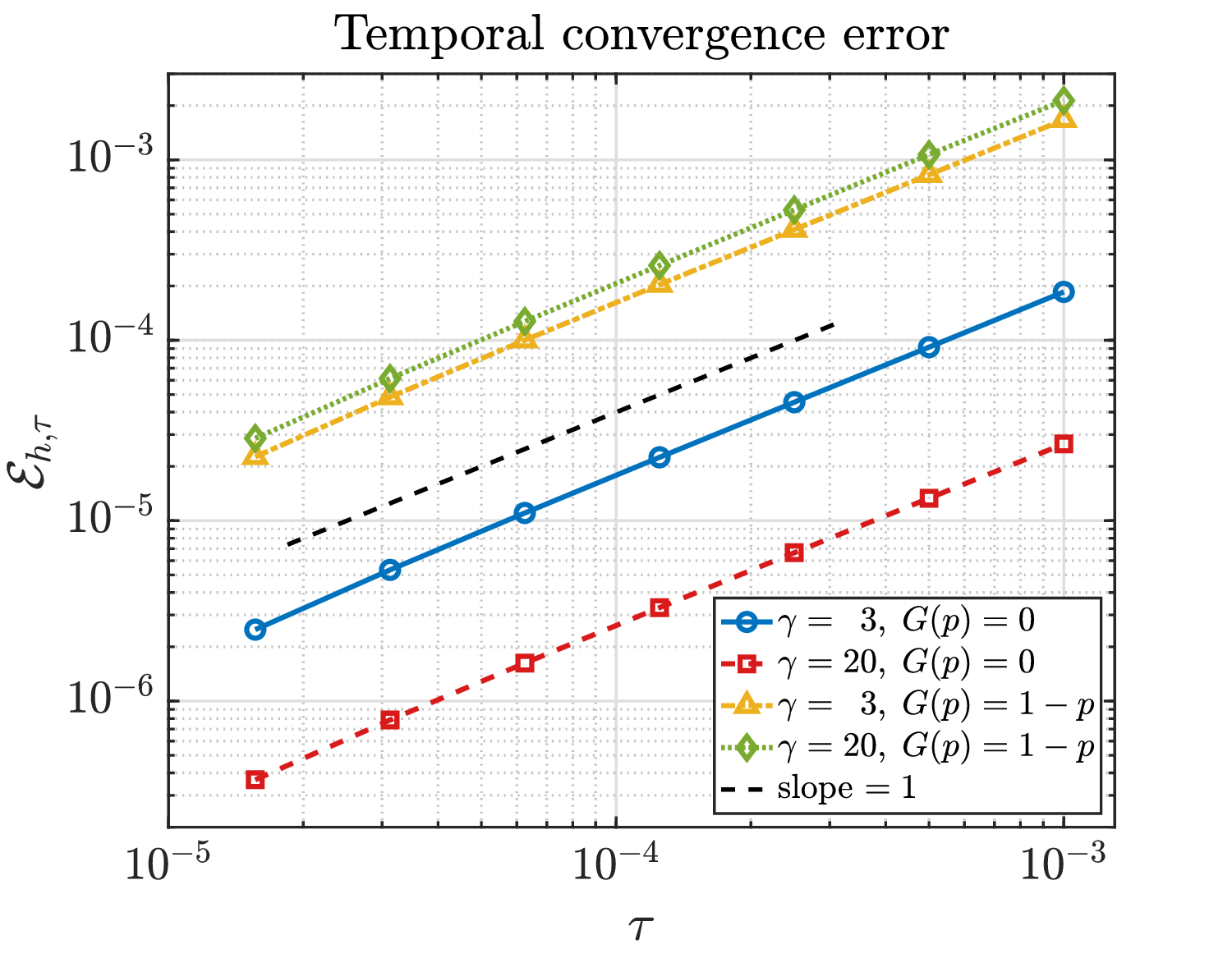}
  \caption{Convergence errors for the one-dimensional accuracy test.
  Left: spatial refinement with \(\tau=10^{-5}\). Right: temporal refinement
  with \(N=4096\). }
  \label{fig:error}
\end{figure}

\paragraph{Structure preservation for the growth model.}
We next check the discrete energy-dissipation property in the
pressure-dependent growth case \(G(p)=1-p\). Unlike the pure diffusion case,
the total mass is not conserved because of proliferation. We therefore monitor the normalized mass \(m_h(t)/m_h(0)\)
and the modified discrete energy
\[
E_h(t^k)
=
h\sum_{i=1}^N
\left(
\frac{(n_i^k)^{\gamma+1}}{\gamma+1}
-p_H n_i^k
\right).
\]
The initial density is the same as in the preceding accuracy test.  For this long-time diagnostic, we use  \(\Omega = [-20,20]\), \(h=1/64\), \(\tau=0.01\), and
run the computation up to \(T=10\). Figure~\ref{fig:general_sp} shows that the mass changes due to
proliferation, while the modified energy remains nonincreasing, in agreement
with the discrete dissipation estimate in Proposition~\ref{prop:fd-energy-dissipation}.

\begin{figure}[htp]
  \centering
  \includegraphics[width=0.95\textwidth]{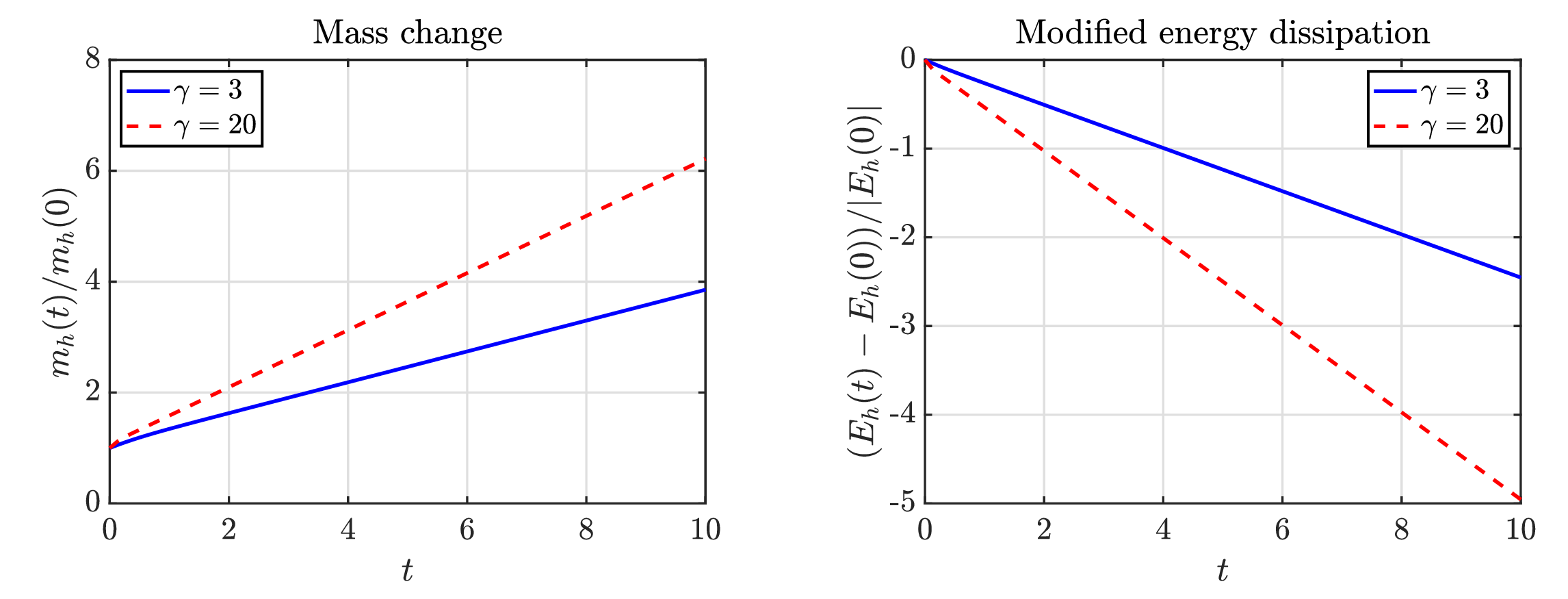}
\caption{Mass change and modified energy dissipation for the growth model
with \(G(p)=1-p\). Left: normalized mass \(m_h(t)/m_h(0)\).
Right: normalized modified energy change
\((E_h(t)-E_h(0))/|E_h(0)|\). }
\label{fig:growth-mass-energy}
  \label{fig:general_sp}
\end{figure}

\subsection{Fixed-grid stiff-pressure limit}
\label{subsec:fixed-grid-limit}

\paragraph{Hele--Shaw reference solution and setup.}
We first describe the reference Hele--Shaw solution used in the
stiff-pressure test. According to
\eqref{eq:HS_limit_pressure}--\eqref{eq:HS_limit_boundary} with
\(G(p)=1-p\), a one-dimensional symmetric patch \((-R(t),R(t))\)
satisfies
\[
-p_{\infty,xx}=1-p_\infty,\quad |x|<R(t),
\qquad
p_\infty(\pm R(t),t)=0 .
\]
By symmetry, \(p_{\infty,x}(0,t)=0\). Solving this boundary-value problem, 
together with the boundary velocity law in \eqref{eq:HS_limit_velocity}, gives
the explicit reference solution
\begin{equation}
p_\infty(x,t)
=
\left(1-\frac{\cosh x}{\cosh R(t)}\right)_+,
\quad
n_\infty(x,t)=\mathbf 1_{[-R(t),R(t)]}(x),
\quad
R(t)=\operatorname{arsinh}\bigl(e^t\sinh R_0\bigr).
\label{eq:HS-explicit-solution}
\end{equation}

For the finite-\(\gamma\) computations, with \(R_0=1\), we take
\[
p_0(x)=\left(1-\frac{\cosh x}{\cosh R_0}\right)_+,
\qquad
n_{0,\gamma}(x)=p_0(x)^{1/\gamma}.
\]
All computations in this subsection use $\Omega=[-5, 5]$, \(N=6400\), \(h=10/N\),
\(\tau=10^{-4}\), and \(T=1\). The same \(h\) and
\(\tau\) are used for all values of \(\gamma\), which gives a fixed-grid
test of the stiff-pressure limit discussed in
Section~\ref{subsec:large-gamma-structure}.

\paragraph{Convergence to the Hele--Shaw profile.}
For the profile comparison, we display the numerical solutions for
\(\gamma=3,20,200\). Figure~\ref{fig:example3-profiles} shows the density and
pressure at \(T=1\), together with the explicit Hele--Shaw profiles in
\eqref{eq:HS-explicit-solution}. As \(\gamma\) increases, the numerical density
approaches the characteristic function \(n_\infty\), while the pressure
converges to the limiting Hele--Shaw pressure. The transition in the density
becomes sharper near the free boundary, whereas the pressure profile remains
smooth and continuous.

\begin{figure}[htp]
  \centering
  \includegraphics[width=0.9\textwidth]{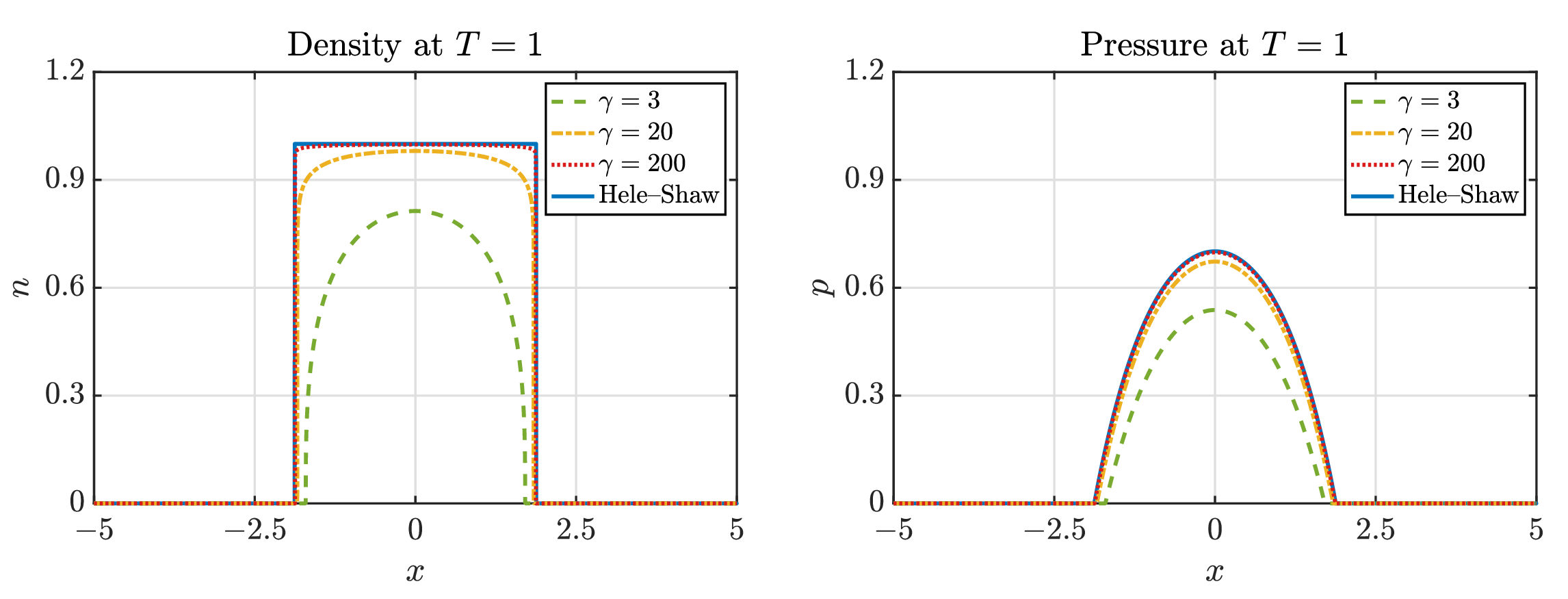}
\caption{Fixed-grid stiff-pressure limit in one dimension at \(T=1\).
Left: density profiles. Right: pressure profiles. The solid curves denote
the explicit Hele--Shaw limit.}
  \label{fig:example3-profiles}
\end{figure}

For the quantitative comparison, let \(K=T/\tau\). We first measure the
final-time \(L^1\) differences in density and pressure by
\[
e_n^\gamma(T)
=
h\sum_{i=1}^N
\left|n_{\gamma,i}^K-n_\infty(x_i,T)\right|,
\qquad
e_p^\gamma(T)
=
h\sum_{i=1}^N
\left|p_{\gamma,i}^K-p_\infty(x_i,T)\right|.
\]
For the symmetric patch limit, we also define the mass-based effective
radius and its error by
\[
R_{\gamma,h}(T)
=
\frac12 h\sum_{i=1}^N n_{\gamma,i}^K,
\qquad
e_R^\gamma(T)=|R_{\gamma,h}(T)-R(T)|.
\]
Motivated by the discrete complementarity relation
\eqref{eq:limit-density-pressure-complementarity}, we compute
\[
\mathcal C_\gamma(T)
=
h\sum_{i=1}^N
\left|p_{\gamma,i}^K\bigl(1-n_{\gamma,i}^K\bigr)\right|.
\]
To check the interior discrete Hele--Shaw pressure equation
\eqref{eq:limit-discrete-pressure-equation}, we evaluate the residual away
from the interface on
\[
\mathcal I_\eta(T)
=
\{i:\ p_\infty(x_i,T)\ge \eta\},
\qquad
\eta=0.2,
\]
and define
\[
\mathcal H_{\gamma,\eta}(T)
=
h\sum_{i\in\mathcal I_\eta(T)}
\left|
(\delta_h^2p_\gamma^K)_i+1-p_{\gamma,i}^K
\right|.
\]
Here \(\delta_h^2\) denotes the centered second difference. The final-time
diagnostics in Table~1 show that all errors decrease as \(\gamma\) increases,
confirming the convergence toward the analytical Hele--Shaw limit and the
discrete complementarity structure. The observed orders are close to one,
suggesting an approximately first-order decay with respect to \(1/\gamma\).

\begin{table}[!htbp]
\centering
\caption{Final-time diagnostics for the fixed-grid stiff-pressure test with
\(N=6400\), \(T=1\) and \(\eta=0.2\). }
\label{tab:stiff-diagnostics}
\begingroup
\small
\renewcommand{\arraystretch}{1.08}
\setlength{\tabcolsep}{5pt}
\begin{tabular}{lcccccc}
\toprule
\(\gamma\) & \(10\) & \(20\) & \(40\) & \(80\) & \(160\) & \(320\) \\
\midrule
\(e_n^\gamma(T)\)
& \(4.758{\rm E}{-1}\) & \(2.453{\rm E}{-1}\) & \(1.249{\rm E}{-1}\)
& \(6.341{\rm E}{-2}\) & \(3.231{\rm E}{-2}\) & \(1.667{\rm E}{-2}\) \\
 order
& -- & \(0.96\) & \(0.97\) & \(0.98\) & \(0.97\) & \(0.95\) \\
\hline
\(e_p^\gamma(T)\)
& \(2.271{\rm E}{-1}\) & \(1.194{\rm E}{-1}\) & \(6.119{\rm E}{-2}\)
& \(3.102{\rm E}{-2}\) & \(1.553{\rm E}{-2}\) & \(8.021{\rm E}{-3}\) \\
 order
& -- & \(0.93\) & \(0.96\) & \(0.98\) & \(1.00\) & \(0.95\) \\
\hline
\(e_R^\gamma(T)\)
& \(2.380{\rm E}{-1}\) & \(1.228{\rm E}{-1}\) & \(6.257{\rm E}{-2}\)
& \(3.181{\rm E}{-2}\) & \(1.626{\rm E}{-2}\) & \(8.439{\rm E}{-3}\) \\
 order
& -- & \(0.95\) & \(0.97\) & \(0.98\) & \(0.97\) & \(0.95\) \\
\hline
\(\mathcal C_\gamma(T)\)
& \(1.074{\rm E}{-1}\) & \(5.479{\rm E}{-2}\) & \(2.764{\rm E}{-2}\)
& \(1.388{\rm E}{-2}\) & \(6.956{\rm E}{-3}\) & \(3.480{\rm E}{-3}\) \\
 order
& -- & \(0.97\) & \(0.99\) & \(0.99\) & \(1.00\) & \(1.00\) \\
\hline
\(\mathcal H_{\gamma,\eta}(T)\)
& \(1.908{\rm E}{-1}\) & \(9.237{\rm E}{-2}\) & \(4.521{\rm E}{-2}\)
& \(2.263{\rm E}{-2}\) & \(1.090{\rm E}{-2}\) & \(4.964{\rm E}{-3}\) \\
 order
& -- & \(1.05\) & \(1.03\) & \(1.00\) & \(1.05\) & \(1.13\) \\
\bottomrule
\end{tabular}
\endgroup
\end{table}

We also record the average and maximum Newton iteration counts,
\[
\bar N_{\rm it}
=
\frac{1}{K}\sum_{k=0}^{K-1}N_k,
\qquad
N_{\rm it}^{\max}
=
\max_{0\le k<K}N_k,
\]
where \(N_k\) denotes the number of Newton iterations at the \(k\)-th time
step. The values in Table~\ref{tab:newton-gamma} show that the iteration counts remain moderate
throughout the stiff-pressure regime.

\begin{table}[!htbp]
\centering
\caption{Newton iteration counts for the fixed-grid stiff-pressure test with \(N=6400\) and \(T=1\).}
\label{tab:newton-gamma}
\begingroup
\small
\renewcommand{\arraystretch}{1.08}
\setlength{\tabcolsep}{8pt}
\begin{tabular}{lcccccc}
\toprule
\(\gamma\) & \(10\) & \(20\) & \(40\) & \(80\) & \(160\) & \(320\) \\
\midrule
\(\bar N_{\rm it}\) & \(4.71\) & \(4.58\) & \(4.52\) & \(4.43\) & \(4.38\) & \(4.36\) \\
\(N_{\rm it}^{\max}\) & \(7\) & \(6\) & \(7\) & \(8\) & \(9\) & \(9\) \\
\bottomrule
\end{tabular}
\endgroup
\end{table}

\subsection{Two-dimensional free boundary evolutions}
\label{subsec:2d-free-boundary}

We finally present two-dimensional simulations to illustrate free-boundary
evolutions with topology changes. The simulations use the two-dimensional
extension described in Appendix~\ref{app:2d-extension}. Both tests use
\(500\times500\) uniform cells on \([-5,5]^2\), so that \(h_x=h_y=0.02\),
with \(\gamma=40\) and \(\tau=10^{-3}\). The level set \(n=0.5\) is used only to visualize the
numerical free boundary and is not used by the scheme. In
Figures~\ref{fig:annulus-topview}--\ref{fig:twodisks-topview}, only the
central region \([-4,4]^2\) is displayed.

\paragraph{Hole filling from an annular initial density.}
The first example starts from an annular density,
\[
n_0(x,y)
=
0.8\,\mathbf 1_{\{r_{\rm in}\le \sqrt{x^2+y^2}\le r_{\rm out}\}}(x,y),
\qquad
r_{\rm in}=0.8,\quad
r_{\rm out}=2.0 .
\]
Figure~\ref{fig:annulus-topview} shows
the expansion of the outer boundary and the contraction of the inner boundary,
leading to the transition from a doubly connected region to a simply connected
region.

\begin{figure}[htp]
  \centering
  \includegraphics[width=0.97\textwidth]{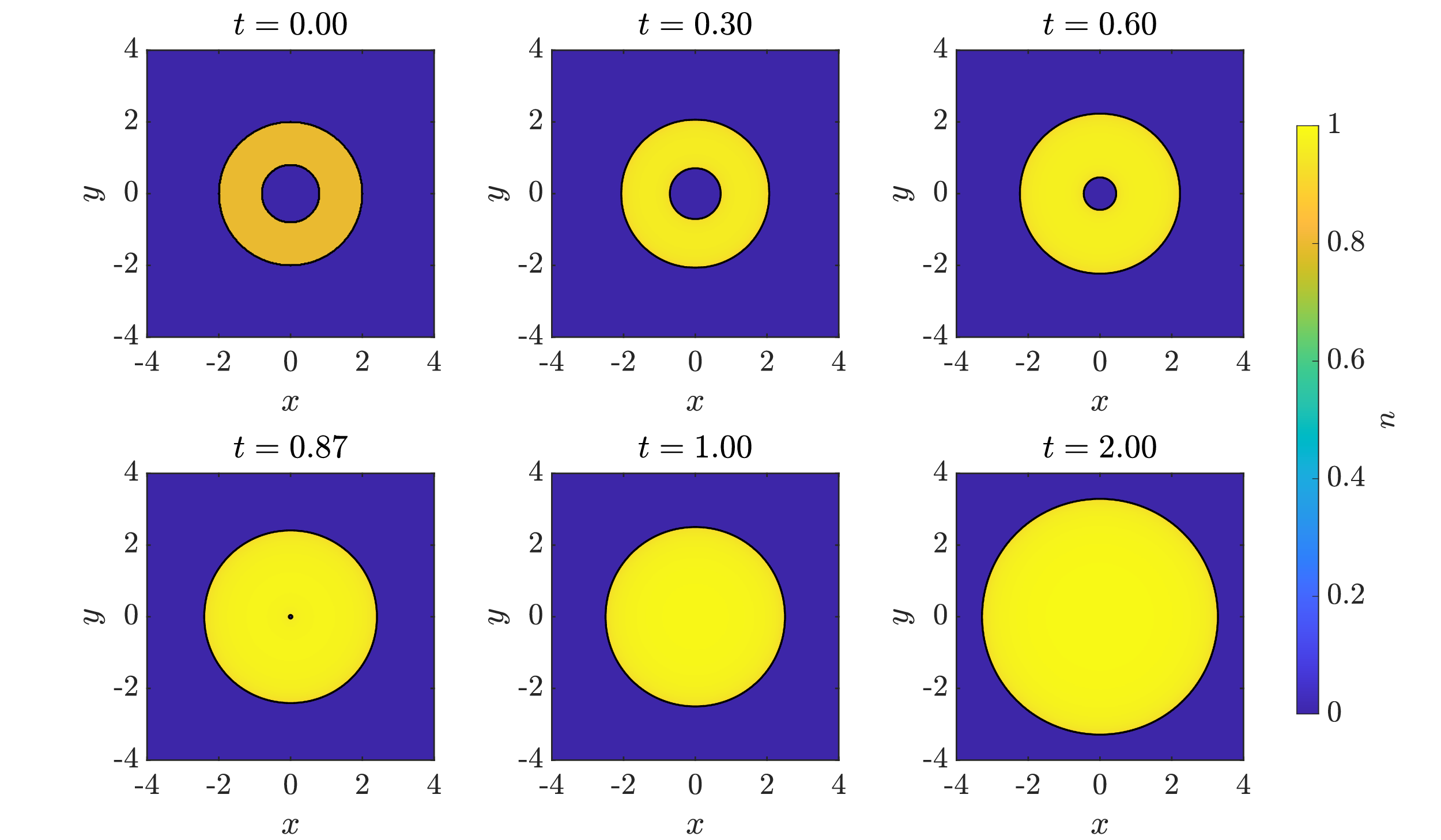}
  \vspace{-5mm}
 \caption{Density snapshots for hole filling from an annular initial density.}
  \label{fig:annulus-topview}
\end{figure}

\paragraph{Merging of two tumor patches.}
The second example starts from two separated disks,
\[
n_0(x,y)
=
0.8\,\mathbf 1_{B((-c_0,0),r_0)\cup B((c_0,0),r_0)}(x,y),
\qquad
r_0=0.8,\quad
c_0=1.4 .
\]
Here \(B(z,r)\) denotes the disk centered at \(z\) with radius \(r\).
Figure~\ref{fig:twodisks-topview} shows that the two components first expand
separately, then touch each other, and finally merge into a single connected
region. After \(t=3\), the merged patch continues to expand without further
topology change.

\begin{figure}[htp]
  \centering
  \includegraphics[width=0.97\textwidth]{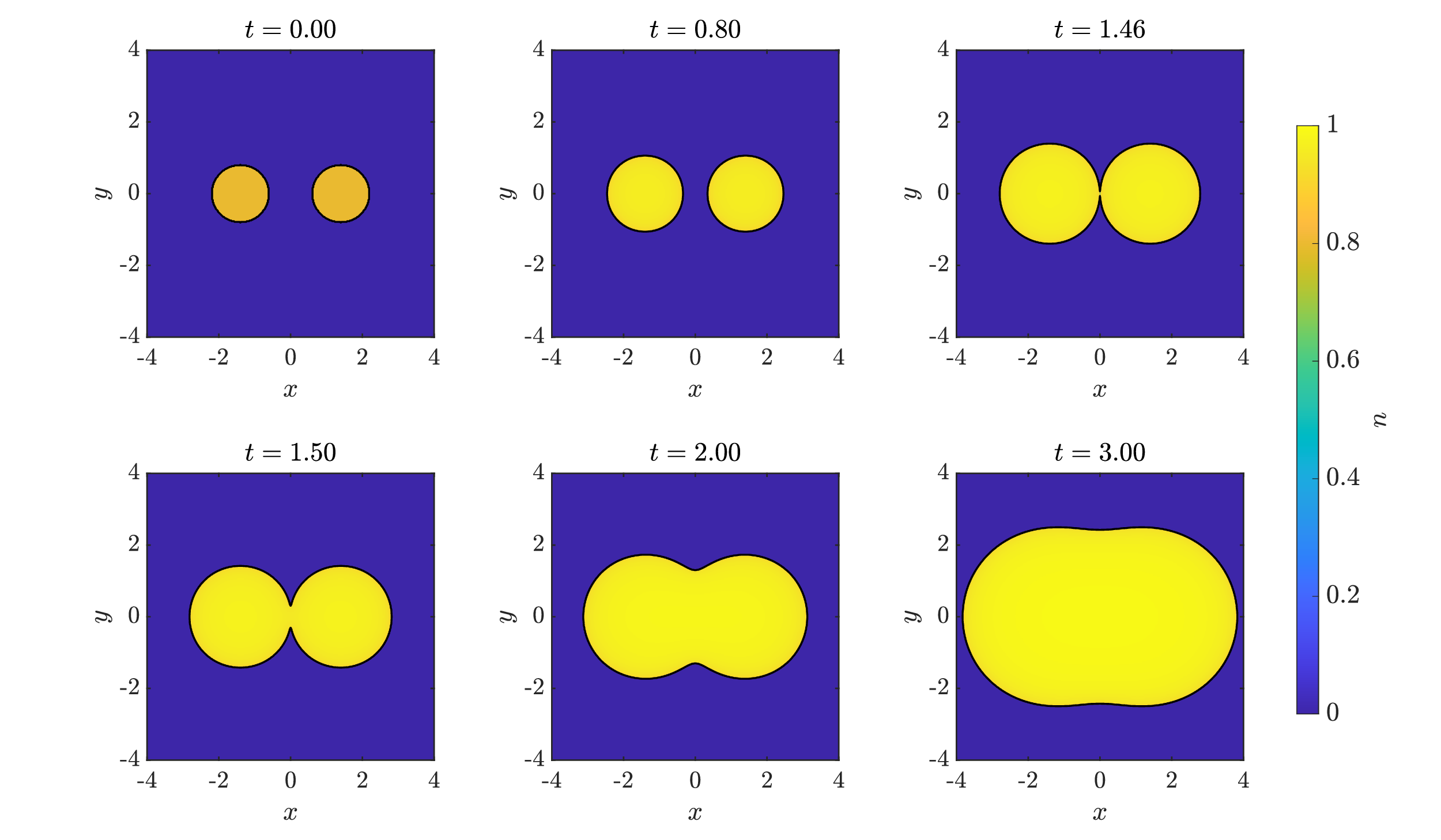}
  \vspace{-5mm}
\caption{Density snapshots for the merging of two initially separated tumor patches.}
  \label{fig:twodisks-topview}
\end{figure}

These two examples show that the fixed-grid scheme captures hole filling and
component merging without explicit interface tracking.

\section{Conclusion}
\label{sec:conclusion}

We have developed a structure-preserving finite difference method for a
pressure-driven tumor growth model with pressure-dependent proliferation.
The scheme is derived from the Onsager variational principle using a modified
energy shifted by the homeostatic pressure, which makes the reaction term
dissipative and allows transport and growth to be treated in a unified
Rayleighian formulation. The resulting fully discrete scheme uses explicit
mobilities and an implicit pressure update. We proved nonnegativity
preservation, the homeostatic upper bound, discrete modified energy
dissipation, and a fixed-grid stiff-pressure limiting structure.

The numerical results confirm the accuracy and structure-preserving behavior
of the method. The
scheme captures Barenblatt profiles in the pure diffusion case, preserves
mass in the no-growth case, dissipates the appropriate energy, and converges
toward the Hele--Shaw limit as \(\gamma\) becomes large. Two-dimensional tests
show that the method can handle free boundary evolutions with topology
changes, including hole filling and the merging of separated patches. Future
work will consider extensions to models with nutrients, chemotaxis, multiple
cell populations, and adaptive resolution near free boundaries.

\bigskip
\noindent{\large\bf Acknowledgements.} X. Ruan acknowledges support  from the National Natural Science Foundation of China under grant  12201436 and the R\&D Program of Beijing Municipal Education Commission under grant KM202310028016.
W. Huang acknowledges support  from the National Natural Science Foundation of China under grant  12001034.

\appendix

\section{Two-dimensional extension}
\label{app:2d-extension}

In two space dimensions, the scheme is extended by using the same
conservative finite-difference discretization in the \(x\)- and
\(y\)-directions. Let \(h_x\) and \(h_y\) be the mesh sizes. We set
\[
p_{i,j}^k=(n_{i,j}^k)^\gamma,
\qquad
M_{i,j}^k=-n_{i,j}^k g(p_{i,j}^k).
\]
The face mobilities are evaluated explicitly by arithmetic averages,
\[
n_{i+\frac12,j}^k
=
\frac{n_{i,j}^k+n_{i+1,j}^k}{2},
\qquad
n_{i,j+\frac12}^k
=
\frac{n_{i,j}^k+n_{i,j+1}^k}{2}.
\]
The pressure-gradient fluxes are
\[
F_{i+\frac12,j}^{x,k+1}
=
n_{i+\frac12,j}^k
\frac{p_{i+1,j}^{k+1}-p_{i,j}^{k+1}}{h_x},
\qquad
F_{i,j+\frac12}^{y,k+1}
=
n_{i,j+\frac12}^k
\frac{p_{i,j+1}^{k+1}-p_{i,j}^{k+1}}{h_y}.
\]
With zero normal flux imposed on the boundary, the update is
\[
\frac{n_{i,j}^{k+1}-n_{i,j}^k}{\tau}
=
(d_xF^{x,k+1})_{i,j}
+
(d_yF^{y,k+1})_{i,j}
-
M_{i,j}^k
\left(p_{i,j}^{k+1}-p_H\right),
\qquad
p_{i,j}^{k+1}=(n_{i,j}^{k+1})^\gamma ,
\]
where
\[
(d_xF^{x,k+1})_{i,j}
=
\frac{F^{x,k+1}_{i+\frac12,j}-F^{x,k+1}_{i-\frac12,j}}{h_x},
\qquad
(d_yF^{y,k+1})_{i,j}
=
\frac{F^{y,k+1}_{i,j+\frac12}-F^{y,k+1}_{i,j-\frac12}}{h_y}.
\]
The positivity preservation, homeostatic upper bound, and modified energy
dissipation are obtained by the same arguments as in one dimension, using
summation by parts in the two coordinate directions.

\section{Implementation of the nonlinear solver}
\label{app:nonlinear-solver}

This appendix summarizes the nonlinear solver used for the fully discrete
scheme. At each time step, the coefficients \(n^k_{i+\frac12}\) and
\(\mathcal M_i^k\) are evaluated explicitly from the known solution, with
\(\mathcal M_i^k=-n_i^kg(p_i^k)\). The nonlinear unknown is
\(u=\{u_i\}_{i=1}^N\), representing \(n^{k+1}\), and the pressure is
\(p_i(u)=u_i^\gamma\). The fully discrete scheme is written as
\[
R_i(u):=
u_i-n_i^k+\tau \mathcal M_i^k(u_i^\gamma-p_H)-\tau(d_hF(u))_i=0,
\qquad 1\le i\le N,
\]
where
\[
F_{i+\frac12}(u)
=
n^k_{i+\frac12}\frac{u_{i+1}^\gamma-u_i^\gamma}{h},
\qquad
F_{\frac12}(u)=F_{N+\frac12}(u)=0 .
\]

We solve \(R(u)=0\) by a damped Newton method. Given \(u^{(\ell)}\), the
Newton correction \(s^{(\ell)}\) is obtained from
\[
J_R(u^{(\ell)})s^{(\ell)}=-R(u^{(\ell)}),
\]
where \(J_R(u)\) is the Jacobian matrix of the residual. The Jacobian is
tridiagonal in one space dimension and has the standard five-point sparse
structure in two space dimensions. A backtracking line search chooses
\(\alpha_\ell\in(0,1]\) so that \(u^{(\ell)}+\alpha_\ell s^{(\ell)}\ge0\)
and the residual decreases sufficiently. The new iterate is
\[
u^{(\ell+1)}=u^{(\ell)}+\alpha_\ell s^{(\ell)} .
\]
The iteration is stopped when
\[
\frac{\|R(u^{(\ell)})\|_\infty}{1+\|n^k\|_\infty}
\le \mathrm{tol}.
\]
In the computations, we use \(u^{(0)}=n^k\) and \(\mathrm{tol}=10^{-10}\).
The one-dimensional Newton systems are solved by a direct tridiagonal solver,
while the two-dimensional systems are solved by a sparse linear solver.

\end{document}